\newtheorem{Theorem}{Theorem}[section]
\newtheorem{Lemma}[Theorem]{Lemma}
\newtheorem{Corollary}[Theorem]{Corollary}
\newtheorem{Proposition}[Theorem]{Proposition}
\newtheorem{Remark}[Theorem]{Remark}
\newtheorem{Example}[Theorem]{Example}
\newtheorem{Conjecture}[Theorem]{Conjecture}
\newtheorem{Question}[Theorem]{Question}
\def\qed{\ifhmode\textqed\fi
	\ifmmode\ifinner\quad\qedsymbol\else\dispqed\fi\fi}
\def\textqed{\unskip\nobreak\penalty50
	\hskip2em\hbox{}\nobreak\hfill\qedsymbol
	\parfillskip=0pt \finalhyphendemerits=0}
\def\dispqed{\rlap{\qquad\qedsymbol}}
\def\m{\mathfrak{m}}
\def\n{\mathfrak{n}}
\def\height{\textup{height}}
\def\Ass{\textup{Ass}}
\def\FF{\mathbb{F}}
\def\Tor{\textup{Tor}}
\def\depth{\textup{depth\,}}
\def\lcm{\textup{lcm}}
\def\im{\textup{im}}
\def\bicosize{\textup{big-cosize}}
\def\pd{\textup{proj\,dim}}
\def\supp{\textup{supp}}
\begin{document}
	
	\title{Monomial ideals whose all matching powers\\ are Cohen-Macaulay}
	\author{Antonino Ficarra, Somayeh Moradi}
	
	\address{Antonino Ficarra, Departamento de Matem\'{a}tica, Escola de Ci\^{e}ncias e Tecnologia, Centro de Investiga\c{c}\~{a}o, Matem\'{a}tica e Aplica\c{c}\~{o}es, Instituto de Investiga\c{c}\~{a}o e Forma\c{c}\~{a}o Avan\c{c}ada, Universidade de \'{E}vora, Rua Rom\~{a}o Ramalho, 59, P--7000--671 \'{E}vora, Portugal}
	\email{antonino.ficarra@uevora.pt}\email{antficarra@unime.it}
	
	\address{Somayeh Moradi, Department of Mathematics, Faculty of Science, Ilam University, P.O.Box 69315-516, Ilam, Iran}
	\email{so.moradi@ilam.ac.ir}
	
	\subjclass[2020]{Primary 13C05, 13C14, 13C15; Secondary 05E40}
	\keywords{Cohen-Macaulay ring, matching power, edge ideal}
	
	\begin{abstract}
		In the present paper, we aim to classify monomial ideals whose all matching powers are Cohen-Macaulay. We especially focus our attention on edge ideals. The Cohen-Macaulayness of the last matching power of an edge ideal is characterized, providing an algebraic analogue of the famous Tutte theorem regarding graphs having a perfect matching. For chordal graphs, very well-covered graphs and Cameron-Walker graphs, we completely solve our problem.
	\end{abstract}
	
	\maketitle
	\section*{Introduction}
	
	Let $S$ be either a regular local ring, or the polynomial ring $S=K[x_1,\dots,x_n]$ over a field $K$, and let $I\subset S$ be an ideal. We say that $I$ is Cohen-Macaulay if $S/I$ is a Cohen-Macaulay ring. Assume, moreover, that $I$ is radical. Then, a classical result in commutative algebra guarantees that $I$ is a complete intersection if and only if $I^k$ is Cohen-Macaulay for infinitely many $k$. This result was proved in the late seventies, more or less independently, by Achilles and Vogel in \cite{AV}, by Cowsik and Nori in \cite{CN}, and by Waldi in \cite{Waldi}.
	
	In particular, let $I=I(G)$ be the edge ideal of a finite simple graph $G$ on the vertex set $\{x_1,\dots,x_n\}$, that is, the ideal in the polynomial ring $S=K[x_1,\dots,x_n]$ generated by those monomials $x_ix_j$ for which $\{x_i,x_j\}$ is an edge. Hereafter, we will say that $G$ is a Cohen-Macaulay graph, if $I(G)$ is Cohen-Macaulay. We have that $I(G)$ is a radical ideal, and the aforementioned result implies that all powers $I(G)^k$ are Cohen-Macaulay if and only if $E(G)$ consists of disjoint edges. 
	
	The situation becomes more interesting if one turns its attention to other kind of powers. The first paper in this direction \cite{MT}, written by Minh and Trung, characterizes those unmixed 2-dimensional monomial ideals $I\subset S$ whose (almost) all symbolic powers $I^{(k)}$ are Cohen-Macaulay. After the appearance of this paper, many researchers tried to characterize all those unmixed monomial ideals $I\subset S$ whose (almost) all symbolic powers are Cohen-Macaulay, culminating with the beautiful solution due, independently, to Minh and Trung \cite{MTb}, and Varbaro \cite{Var}. Writing any squarefree monomial ideal $I$ as the Stanley-Reisner ideal $I_\Delta$ of a simplicial complex $\Delta$, the aforementioned authors proved, independently, that $I_\Delta^{(k)}$ is Cohen-Macaulay for all $k\gg0$ if and only if $\Delta$ is a matroid.
	
	Very recently, in \cite{BHZN18} and \cite{EHHM2022a}, the concept of squarefree power was introduced. Subsequently, this concept was extended to all monomial ideals with the introduction of \textit{matching powers} \cite{EF}, see also \cite{CFL3,DRS24,EF1,EH2021,EHHM2022b,FPack2,FHH2022,KNQ24,SASF2024,SASF2022,SASF2023,SASF2024b}. Hereafter, we always refer to this generalized concept. Let $I\subset S$ be a monomial ideal and let $\mathcal{G}(I)$ be its minimal monomial generating set. The $k$th matching power of $I$ is the monomial ideal generated by the products $u_1\cdots u_k$, where the $u_i\in\mathcal{G}(I)$ form a regular sequence. In other words, $\supp(u_i)\cap\supp(u_j)=\emptyset$ for all $1\le i<j\le k$, where the \textit{support} of a monomial $w\in S$ is defined as the set $\supp(w)=\{x_i:x_i\ \textup{divides}\ w\}$. Moreover, the \textit{monomial grade} of $I$ is defined as $\nu(I)=\max\{k:I^{[k]}\ne0\}$. A {\em matching} of $G$ is a family of pairwise disjoint edges of $G$. Let $\nu(G)$ be the maximum size of a matching of $G$. Then, it is easy to see that $\nu(I(G))=\nu(G)$. Furthermore, the generators of $I(G)^{[k]}$ correspond to the vertex sets of the $k$-matchings of $G$, which motivates the choice to name the ideals $I^{[k]}$ the matching powers of $I$. Our goal is to identify and classify those monomial ideals $I\subset S$ whose matching powers $I^{[k]}$ are Cohen-Macaulay, for all $1\le k\le\nu(I)$.
	
	Recently, in \cite{DRS24}, Das, Roy and Saha proved that $I(G)^{[k]}$ is Cohen-Macaulay for all $1\le k\le\nu(G)$, if $G$ is a Cohen-Macaulay forest. Moreover, it is proven in \cite{EF} and \cite{EF1} that $I(G)^{[\nu(G)]}$ is a polymatroidal ideal for any graph $G$. By a result due to Herzog and Hibi \cite{HH2006}, the Cohen-Macaulay polymatroidal ideals are of very special nature. These two facts suggest that there should be a pleasant theory regarding the Cohen-Macaulayness of the matching powers of an edge ideal. This is what we consider in this paper.
	More precisely, we study when all matching powers of $I(G)$ are Cohen-Macaulay. We characterize such graphs $G$ when $G$ has a perfect matching and the $(\nu(G)-1)$th matching power of $I(G)$ has height $2$. These conditions are fulfilled when $G$ is a Cohen-Macaulay very well-covered graph. Chordal graphs and Cameron-Walker graphs are other families for which we give such characterization. We will also discuss the situation for the graphs having up to $7$ vertices.
	\vspace{0.1cm}

	The paper is structured as follows.
	In Section \ref{sec1}, we investigate the hereditary nature of the Cohen-Macaulay property of $I(G)^{[k]}$.
	It is shown that if $I(G)^{[k]}$ possesses the Cohen-Macaulay property, then so does $I(G')^{[k]}$, where $G'$ is either the subgraph $G\setminus N_G[x]$ for any vertex $x$ of $G$ or a connected component of $G$, see Theorem~\ref{neighbourhood} and Corollary~\ref{ConnCom}. Another main result in this section is Theorem~\ref{Thm:I(G)[nu(G)]CM}, which gives a purely combinatorial characterization for the edge ideals whose last non-vanishing matching power is Cohen-Macaulay. More precisely, it is shown that $I(G)^{[\nu(G)]}$ is Cohen-Macaulay if and only if either $G$ has a perfect matching, or $G\setminus\{i\}$ has a perfect matching of size $|V(G)|-1$ for all $i\in V(G)$. This result can be seen as an algebraic analogue of the famous Tutte theorem in classical graph theory \cite{Tutte}. This characterization plays an important role in the study of edge ideals of graphs whose all matching powers are Cohen-Macaulay. 
	
	In Section~\ref{sec2} we concentrate our attention on the family of graphs which admit a perfect matching. In Theorem~\ref{Thm:I(G)[k]CM-perfect} we characterize the graphs $G$ from this family for which all matching powers $I(G)^{[k]}$ are Cohen-Macaulay, provided that $\dim S/I(G)^{[\nu(G)-1]}=|V(G)|-2$. This dimension can take only the two values $|V(G)|-2$ and $|V(G)|-3$, as is shown in Lemma~\ref{dimineq}. To prove Theorem~\ref{Thm:I(G)[k]CM-perfect} we use the Hilbert-Burch Theorem and a well-known result from graph theory on the characterization of trees. An interesting family of graphs which meets the conditions of Theorem~\ref{Thm:I(G)[k]CM-perfect} is the family of the Cohen-Macaulay very well-covered graphs. Using a characterization of this family due to Crupi, Rinaldo and Terai \cite{CRT2011}, in Proposition~\ref{Prop:dim-I(G)[k]-VWC} we give a formula for the dimension of $S/I(G)^{[k]}$ for all $k$. In particular, when $k=\nu(G)-1$, this dimension is $|V(G)|-2$. So, applying our main theorem to this family we get Corollary~\ref{Thm:I(G)[k]CM-VWC}, which shows that the only very well-covered graphs for which all matching powers of their edge ideals are Cohen-Macaulay, are only the Cohen-Macaulay forests. Bipartite graphs and whisker graphs are subfamilies of very well-covered graphs. Applying our result to these families we obtain the analogous characterizations, see Corollaries \ref{bipartite}, \ref{whisker}. We close this section by obtaining the normalized depth function of edge ideals of very well-covered graphs $G$ such that $I(G)^{[k]}$ is Cohen-Macaulay, for all $1\le k\le\nu(G)$, see Corollary~\ref{NorDepthFun}.
	
	In Section~\ref{sec3}, we answer our question on Cohen-Macaulay matching powers for the family of chordal graphs and Cameron-Walker graphs. For chordal graphs we use a result by Herzog, Hibi and Zheng~\cite{HHZ} which characterizes Cohen-Macaulay chordal graphs. In Theorem~\ref{chordal} we prove that all matching powers of the edge ideal of a chordal graph are Cohen-Macaulay if and only if $G$ is either a complete graph or a Cohen-Macaulay forest. Using the characterizations given for chordal graphs and bipartite graphs, in Theorem~\ref{Cameron-Walker} we answer our question for the family of Cameron-Walker graphs.
	
	In the final Section \ref{sec4}, by using the \textit{Macaulay2} \cite{GDS} packages \texttt{NautyGraphs} \cite{MPNauty} and \texttt{MatchingPowers} \cite{FPack2}, we determined, up to isomorphism, all graphs $G$ with a small number of vertices for which $I(G)^{[k]}$ is Cohen-Macaulay for all $1\le k\le\nu(G)$. Our experiments would indicate that among the graphs on $n\ge5$ non-isolated vertices, besides the complete graphs and the Cohen-Macaulay forests, there are only two extra graphs, up to isomorphism, for which the matching powers of their edge ideal is Cohen-Macaulay for all $k$. Furthermore, we expect that such a classification does not depend on the underlying field $K$ of the polynomial ring $S=K[x_i:i\in V(G)]$.
	
	\section{Cohen-Macaulayness of matching powers of edge ideals}\label{sec1}
	Throughout this paper, $G$ is a finite simple graph. Since the edge ideal of $G$ does not change by removing isolated vertices, we may extra assume that $G$ has no isolated vertices.  We denote the vertex set of $G$ by $V(G)$ and the edge set by $E(G)$.
	In this section we show that the Cohen-Macaulayness of a matching power of $I(G)$ is inherited by its connected components and by its subgraphs $G\setminus N_G[x]$ for any vertex $x$ of $G$. Moreover, we characterize those edge ideals whose last non-vanishing matching power is Cohen-Macaulay.
	
	Recall that for a vertex $x_i\in V(G)$, the sets
	\begin{align*}
		N_G(x_i)\ &=\ \{x_j\in V(G)\ :\ \{x_i,x_j\}\in E(G)\},\\
		N_G[x_i]\ &=\ N_G(x_i)\cup\{x_i\},
	\end{align*}
	are called the \textit{open neighbourhood} and the \textit{closed neighbourhood} of $x_i$ in $G$.
	
	The following lemma is needed for the proof of Theorem~\ref{neighbourhood}.
	
	\begin{Lemma}\label{CHHKTT}
		Let $I\subset S$ be a monomial ideal and $u\in S$ be a monomial with $u\notin I$. If $S/I$ is Cohen-Macaulay, then $S/(I:u)$ is Cohen-Macaulay and moreover $\depth S/I=\depth S/(I:u)$.
	\end{Lemma}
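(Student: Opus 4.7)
The strategy is to prove two things: $\dim S/(I:u) = \dim S/I$ and $\depth S/(I:u) \ge \depth S/I$. Since $S/I$ is Cohen-Macaulay, the chain $\depth S/(I:u) \le \dim S/(I:u) = \dim S/I = \depth S/I$ then forces equality throughout, yielding both the Cohen-Macaulayness of $S/(I:u)$ and the desired depth equality.

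The dimension equality follows from the general containment $\Ass(S/(I:u)) \subseteq \Ass(S/I)$, which for monomial ideals is transparent from the irredundant irreducible decomposition $I = \bigcap_j Q_j$: one has $(I:u) = \bigcap_{u \notin Q_j}(Q_j:u)$ with $\sqrt{(Q_j:u)} = \sqrt{Q_j}$ for each $j$ with $u \notin Q_j$. Cohen-Macaulayness forces every prime in $\Ass(S/I)$ to have height $n - \dim S/I$, and hence the same holds for $\Ass(S/(I:u))$; together with $I \subseteq (I:u)$, this yields $\dim S/(I:u) = \dim S/I$.

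For the depth inequality I would first settle the case $u = x_i$ a single variable. In the squarefree case $I = I_\Delta$, a direct check gives $(I:x_i) = I_{\textup{link}_\Delta(i)}$ inside $S$. Since a Cohen-Macaulay simplicial complex $\Delta$ is pure, the vertex $i$ lies in a facet of maximum dimension, so $\dim \textup{link}_\Delta(i) = \dim \Delta - 1$; the link of a Cohen-Macaulay complex being itself Cohen-Macaulay, the tensor decomposition $S/(I:x_i) \cong K[x_i]\otimes_K K[x_j : j \ne i]/I_{\textup{link}_\Delta(i)}$ forces $\depth S/(I:x_i) = \depth S/I$. For a general, not necessarily squarefree, monomial ideal $I$, I would reduce to the squarefree case by passing to the polarization $I^p \subset S^p$, applying the squarefree result there, and descending via the regular sequence of polarization relations. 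The general statement then follows by induction on $\deg u$: writing $u = x_i v$ with $\deg v < \deg u$, the hypothesis $u \notin I$ forces $v \notin I$, so by induction $S/(I:v)$ is Cohen-Macaulay with $\depth S/(I:v) = \depth S/I$; applying the single-variable result to the ideal $(I:v)$ and the variable $x_i$, which satisfies $x_i \notin (I:v)$ since $u \notin I$, and using $(I:u) = ((I:v):x_i)$, completes the inductive step.

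The main obstacle is the non-squarefree step in the single-variable case. The ideal $(I:x_i)^p$ is not literally equal to $(I^p:x_{i,1})$, but the two agree up to a symmetry of the polarization variables $x_{i,j}$, so they define isomorphic quotient rings and therefore share the Cohen-Macaulay property; carefully reconciling the depth shift introduced by polarization with this identification is the principal technical point.
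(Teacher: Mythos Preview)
Your overall framework matches the paper's exactly: obtain $\depth S/I\le\depth S/(I:u)$, sandwich $\depth S/(I:u)$ between $\depth S/I$ and $\dim S/I$, and let Cohen--Macaulayness collapse the chain. The difference is that the paper simply quotes the depth inequality from \cite{CHHKTT} (their Lemma~2.2, Lemma~4.1, Theorem~4.3), whereas you reprove it from scratch via Stanley--Reisner links and polarization. Your self-contained route is correct: for squarefree $I=I_\Delta$ the identification $S/(I_\Delta:x_i)\cong K[x_i]\otimes_K K[\mathrm{link}_\Delta(i)]$ (with the link taken on the vertex set $[n]\setminus\{i\}$, so that non-neighbours of $i$ contribute variables that get killed) together with Reisner's criterion gives the single-variable case, and the polarization step you flag as the crux does go through---the automorphism cyclically shifting the polarization variables $x_{i,1},\dots,x_{i,a}$ carries $(I^p:x_{i,1})$ to the extension of $((I:x_i))^p$ to $S^p$, and a count of the extra polarization variables (one fewer for $(I:x_i)$ than for $I$) reconciles the depth shift. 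One small redundancy: the paper only uses the trivial inequality $\dim S/(I:u)\le\dim S/I$ coming from $I\subseteq(I:u)$, so your associated-primes argument for the dimension \emph{equality} is more than needed (though it is correct). The upshot is that your approach is more self-contained but considerably longer; the paper's is a two-line citation.
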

	
	\begin{proof}
		By \cite[Lemma 4.1]{CHHKTT}, $\depth S/I\leq \depth S/(I:u)$. Combining this with the inequalities $\depth S/(I:u)\leq \dim S/(I:u)\leq \dim S/I$ implies the assertion.
	\end{proof}
	
	For what follows, we need to recall some concepts and results from \cite{EK,FHT2009}. Let $I\subset S$ be a monomial ideal. By $\mathcal{G}(I)$ we denote the unique minimal set of monomial generators of $I$. Let $J,L\subset S$ be monomial ideals such that $\mathcal{G}(I)$ is the disjoint union of $\mathcal{G}(J)$ and $\mathcal{G}(L)$. Following \cite{FHT2009}, we say that $I=J+L$ is a \textit{Betti splitting}  if
	\begin{equation}\label{eq:BettiSplitEq}
		\beta_{i,j}(I)=\beta_{i,j}(J)+\beta_{i,j}(L)+\beta_{i-1,j}(J\cap L),\ \ \ \text{for all}\,\ i,j\ge0.
	\end{equation}
	
	Consider the natural short exact sequence
	\begin{equation}\label{eq:exactSequence-Betti}
		0\rightarrow J\cap L\rightarrow J\oplus L\rightarrow I\rightarrow0.
	\end{equation}
	By \cite[Proposition 2.1]{FHT2009}, equation (\ref{eq:BettiSplitEq}) holds if and only if the induced maps 
	$$
	\Tor_i^S(K,J\cap L)\rightarrow\Tor_i^S(K,J)\oplus\Tor_i^S(K,L)
	$$
	in the $\Tor$ exact sequence arising from the above sequence are zero for all $i\ge0$. In this case, we say that the inclusion map $J\cap L\rightarrow J\oplus L$ is \textit{$\Tor$-vanishing}. The next result appeared implicitly firstly in \cite[Proposition 3.1]{EK} (see also \cite[Lemma 4.2]{NV19}).
	
	\begin{Proposition}\label{Prop:CriterionBettiSplit}
		Let $J,L\subset S$ be non-zero monomial ideals with $J\subset L$. Suppose there exists a map $\varphi:\mathcal{G}(J)\rightarrow\mathcal{G}(L)$ such that for any $\emptyset\ne\Omega\subseteq\mathcal{G}(J)$ we have
		$$
		\lcm(u:u\in\Omega)\in \mathfrak{m}(\lcm(\varphi(u):u\in\Omega)),
		$$
		where $\mathfrak{m}=(x_1,x_2,\dots,x_n)$. Then the inclusion map $J\rightarrow L$ is $\Tor$-vanishing.
	\end{Proposition}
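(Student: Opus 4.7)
The plan is to exhibit an explicit chain map between the Taylor resolutions of $J$ and $L$ lifting $\iota:J\to L$, all of whose matrix entries lie in $\mathfrak{m}$; this forces the induced map on $\Tor_\bullet^S(K,-)$ to vanish. First I would fix total orderings on $\mathcal{G}(J)$ and $\mathcal{G}(L)$, and let $T_\bullet(J)$ and $T_\bullet(L)$ be the associated Taylor free resolutions. Recall that $T_i(I)$ has $S$-basis $\{e_\Omega\}$ indexed by subsets $\Omega\subseteq\mathcal{G}(I)$ of size $i+1$, with $\deg e_\Omega=\lcm(\Omega)$ and Taylor differential
$$
\partial(e_\Omega)\;=\;\sum_{u\in\Omega}\epsilon(u,\Omega)\,\frac{\lcm(\Omega)}{\lcm(\Omega\setminus\{u\})}\,e_{\Omega\setminus\{u\}},
$$
where $\epsilon(u,\Omega)\in\{\pm 1\}$ is determined by the position of $u$ in the order on $\Omega$.

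Next I would define $\psi:T_\bullet(J)\to T_\bullet(L)$ by
$$
\psi(e_\Omega)\;=\;\begin{cases} \delta(\varphi,\Omega)\,\dfrac{\lcm(\Omega)}{\lcm(\varphi(\Omega))}\,e_{\varphi(\Omega)} & \text{if } \varphi|_\Omega \text{ is injective,}\\ 0 & \text{otherwise,}\end{cases}
$$
where $\varphi(\Omega)=\{\varphi(u):u\in\Omega\}$ and $\delta(\varphi,\Omega)=\pm 1$ is the sign of the permutation comparing the sequence $(\varphi(u))_{u\in\Omega}$, in the order of $\Omega$, to the ambient order of $\varphi(\Omega)$. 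For $|\Omega|=1$ this gives $\psi(e_u)=(u/\varphi(u))\,e_{\varphi(u)}$, and one verifies $\partial_L\psi(e_u)=u=\iota(\partial_J e_u)$, so $\psi$ lifts $\iota$ at the augmentation level.

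The key step is the chain map identity $\partial_L\psi=\psi\partial_J$, which I would verify by cases on $\Omega$. When $\varphi|_\Omega$ is injective, the relation $\varphi(\Omega\setminus u)=\varphi(\Omega)\setminus\varphi(u)$ for each $u\in\Omega$ reduces the check to the sign identity $\epsilon(u,\Omega)\,\delta(\varphi,\Omega\setminus u)=\delta(\varphi,\Omega)\,\epsilon(\varphi(u),\varphi(\Omega))$, a standard exterior-algebra compatibility. When $\varphi|_\Omega$ is not injective, $\partial_L\psi(e_\Omega)=0$ trivially, and on the other side $\psi(e_{\Omega\setminus u})$ is non-zero only when $\varphi|_{\Omega\setminus u}$ becomes injective; this happens precisely when $\Omega$ carries a single colliding pair $\varphi(u_1)=\varphi(u_2)$ with $\varphi$ injective on $\Omega\setminus\{u_1,u_2\}$. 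In that case the two surviving terms share the coefficient $\lcm(\Omega)/\lcm(\varphi(\Omega))$ and carry opposite signs, so they cancel.

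Once $\psi$ is a chain map lifting $\iota$, the hypothesis $\lcm(\Omega)\in\mathfrak{m}\cdot\lcm(\varphi(\Omega))$ guarantees that every coefficient $\lcm(\Omega)/\lcm(\varphi(\Omega))$ is a non-unit, so every entry of $\psi$ lies in $\mathfrak{m}$ and $\psi\otimes_S K$ is the zero morphism. Since $\Tor_i^S(K,-)$ is the $i$-th homology of $T_\bullet(-)\otimes_S K$, the induced map $\iota_*:\Tor_i^S(K,J)\to\Tor_i^S(K,L)$ vanishes for every $i\geq 0$, proving $\Tor$-vanishing. The main technical obstacle will be the sign bookkeeping for the chain map identity in the single-collision non-injective case: both the Taylor signs $\epsilon$ and the permutation signs $\delta$ have to be tracked carefully, but once the conventions are fixed consistently the cancellation is automatic.
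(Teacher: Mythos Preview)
Your argument is correct, and the sign bookkeeping you flag as the main obstacle does indeed work out once conventions are fixed. Note, however, that the paper does not supply its own proof of this proposition: it simply records the statement and attributes it to \cite[Proposition 3.1]{EK} (Eliahou--Kervaire) and \cite[Lemma 4.2]{NV19} (Nguyen--Vu). Your approach---constructing an explicit lift of the inclusion between the Taylor resolutions whose matrix entries all lie in $\mathfrak{m}$---is exactly the standard argument found in those references, so there is no genuine difference in method to compare.
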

	
	For a monomial ideal $I\subset S$, we denote by $\partial^* I$ the ideal generated by the elements of the form $u/x_i$, where $u\in\mathcal{G}(I)$, and $x_i$ is a variable dividing $u$. The following result was proved in \cite[Proposition 4.4]{NV19}.
	
	\begin{Lemma}\label{Lemma:EsistenzaVarphi}
		Let $J,L\subset S$ be non-zero monomial ideals. Suppose that $\partial^* J\subseteq L$. Then $J\subseteq\mathfrak{m}L$ and there exists a map $\varphi:\mathcal{G}(J)\rightarrow \mathcal{G}(L)$ satisfying the assumption of Proposition \ref{Prop:CriterionBettiSplit}. In particular, the inclusion map $J\rightarrow L$ is $\Tor$-vanishing.
	\end{Lemma}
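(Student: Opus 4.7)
The plan is to tackle the three assertions separately. The inclusion $J\subseteq\mathfrak{m}L$ is immediate: for any $u\in\mathcal{G}(J)$ and any variable $x_i\in\supp(u)$, the assumption $\partial^*J\subseteq L$ yields $u/x_i\in L$, and therefore $u=x_i\cdot(u/x_i)\in x_iL\subseteq\mathfrak{m}L$. As a byproduct, for every $u$ and every $x_i\in\supp(u)$ there exists at least one $v\in\mathcal{G}(L)$ dividing $u/x_i$, which is what makes the construction of $\varphi$ possible in the first place.

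To build $\varphi$, I would fix once and for all a total order $x_1<x_2<\cdots<x_n$ on the variables and, for each $u\in\mathcal{G}(J)$, let $x_{i(u)}$ be the \emph{smallest} variable appearing in $\supp(u)$; then I choose $\varphi(u)\in\mathcal{G}(L)$ with $\varphi(u)\mid u/x_{i(u)}$. Since $\varphi(u)\mid u$ automatically, the monomial $V:=\lcm(\varphi(u):u\in\Omega)$ always divides $U:=\lcm(u:u\in\Omega)$ for every nonempty $\Omega\subseteq\mathcal{G}(J)$. The essential task is to promote this divisibility to the membership $U\in\mathfrak{m}V$, and for this I single out the variable $y=x_j$ that is smallest among all variables occurring in $\bigcup_{u\in\Omega}\supp(u)$. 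If $y\in\supp(u)$, then by minimality of $j$ within that union, $j$ is also the smallest index in $\supp(u)$, so $i(u)=j$, $\varphi(u)\mid u/y$, and $\deg_y(\varphi(u))\le\deg_y(u)-1$; if $y\notin\supp(u)$, then $\deg_y(\varphi(u))=0$. Taking the maximum over $u\in\Omega$ and using that $\deg_y(U)\ge1$, a short case check gives $\deg_y(V)\le\deg_y(U)-1$, whence $y\mid U/V$ and $U\in\mathfrak{m}V$. Proposition~\ref{Prop:CriterionBettiSplit} then delivers the claimed $\Tor$-vanishing.

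The main obstacle is precisely this lcm condition: an arbitrary local choice of $\varphi$ need not work. For instance, with $J=(x_1^2x_2,x_1x_2^2)$ and $L=\partial^*J=(x_1^2,x_1x_2,x_2^2)$, the perfectly valid assignment $\varphi(x_1^2x_2)=x_1^2$, $\varphi(x_1x_2^2)=x_2^2$ produces $\lcm(\Omega)=\lcm(\varphi(\Omega))=x_1^2x_2^2$, so $U/V=1\notin\mathfrak{m}$ and the condition fails. Tying $\varphi$ to a global linear order on the variables is what guarantees a \emph{common} witness variable $y$ works across all subsets $\Omega$ simultaneously, and this coherence is the heart of the argument.
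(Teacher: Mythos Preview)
Your argument is correct. The paper itself does not supply a proof of this lemma but simply cites \cite[Proposition 4.4]{NV19}; your construction---choosing for each $u\in\mathcal{G}(J)$ the smallest variable $x_{i(u)}$ in its support and then any $\varphi(u)\in\mathcal{G}(L)$ dividing $u/x_{i(u)}$---is essentially the standard one, and your verification of the lcm condition via the globally smallest variable $y$ occurring in $\Omega$ is clean and complete. The counterexample you include is a nice touch, as it shows precisely why an uncoordinated choice of $\varphi$ can fail and motivates the use of a fixed variable order.
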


	\begin{Lemma}\label{Lem:Betti-Saha}
		Let $G$ be a graph, $1\le k\le\nu(G)$ be an integer, and $x\in V(G)$ be a vertex. Then,
		\begin{equation}\label{eq:BettiSaha}
			(I(G)^{[k]}:x)=\sum_{y\in N_G(x)} yI(G\setminus \{x,y\})^{[k-1]} + I(G\setminus N_G[x])^{[k]}.
		\end{equation}
		If $I(G\setminus N_G[x])^{[k]}\neq 0$, then this decomposition is a Betti splitting, and in particular,
		$$
		\depth S/I(G\setminus N_G[x])^{[k]}-|N_G(x)|\ \ge\ \depth S/(I(G)^{[k]}:x). 
		$$
	\end{Lemma}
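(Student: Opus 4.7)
The plan is to prove the lemma in three stages: establishing the colon decomposition, showing that this decomposition is a Betti splitting, and deriving the depth bound.

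First I would verify (\ref{eq:BettiSaha}) by analyzing minimal generators of $(I(G)^{[k]}:x)$. Every generator of $I(G)^{[k]}$ has the form $u=u_1\cdots u_k$ where $\{u_1,\ldots,u_k\}$ is a $k$-matching of $G$, and $(I(G)^{[k]}:x)$ is generated by the quotients $u/\gcd(u,x)$. If some $u_i=xy$ with $y\in N_G(x)$, then $u/\gcd(u,x)=y\prod_{j\ne i}u_j\in yI(G\setminus\{x,y\})^{[k-1]}$. If $x\notin\supp(u)$ but some $u_i=yz$ with $y\in N_G(x)$, then replacing $u_i$ by $xy$ yields a generator of $I(G)^{[k]}$ whose colon with $x$, namely $y\prod_{j\ne i}u_j$, strictly divides $u$, so $u$ is not a minimal generator of the colon. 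The remaining case forces every $u_i$ to be an edge of $G\setminus N_G[x]$, giving $u\in I(G\setminus N_G[x])^{[k]}$.

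Next, set $J=\sum_{y\in N_G(x)}yI(G\setminus\{x,y\})^{[k-1]}$, $L=I(G\setminus N_G[x])^{[k]}$, and $\mathfrak{a}=(y:y\in N_G(x))$. Disjointness of $\mathcal{G}(J)$ and $\mathcal{G}(L)$ is clear since generators of $J$ involve some $y\in N_G(x)$ while those of $L$ do not. The key identity $J\cap L=\mathfrak{a}L$ follows from $J\subseteq\mathfrak{a}$ combined with $\mathfrak{a}\cap L=\mathfrak{a}L$ (valid because the variables of $\mathfrak{a}$ do not appear in $L$), and from the fact that any $yv_1\cdots v_k$, with $y\in N_G(x)$ and $\{v_1,\ldots,v_k\}$ a $k$-matching of $G\setminus N_G[x]$, is divisible by the $J$-generator $yv_1\cdots v_{k-1}$. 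To verify the Betti splitting via Proposition~\ref{Prop:CriterionBettiSplit}, I define $\varphi_J(yv_1\cdots v_k)=yv_1\cdots v_{k-1}\in\mathcal{G}(J)$ and $\varphi_L(yv_1\cdots v_k)=v_1\cdots v_k\in\mathcal{G}(L)$. For any non-empty $\Omega\subseteq\mathcal{G}(J\cap L)$, the quotient $\lcm(\Omega)/\lcm(\varphi_\bullet(\Omega))$ contains a variable (one of the $v_k$-variables or one of the $y$'s, respectively) missing from $\lcm(\varphi_\bullet(\Omega))$ and hence lies in $\m$; this yields the $\Tor$-vanishing of both inclusions $J\cap L\hookrightarrow J$ and $J\cap L\hookrightarrow L$.

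Finally, the Betti splitting yields
\begin{equation*}
\depth S/(I(G)^{[k]}:x)=\min\{\depth S/J,\;\depth S/L,\;\depth S/(J\cap L)-1\}.
\end{equation*}
Since $J\cap L=\mathfrak{a}L$, I apply the Mayer-Vietoris sequence $0\to S/\mathfrak{a}L\to S/\mathfrak{a}\oplus S/L\to S/(\mathfrak{a}+L)\to 0$. Regularity of the $\mathfrak{a}$-variables on $S/L$ gives $\depth S/(\mathfrak{a}+L)=\depth S/L-|N_G(x)|$, and a depth-chase through the associated long $\operatorname{Ext}$ sequence produces $\depth S/(\mathfrak{a}L)=\depth S/L-|N_G(x)|+1$, from which the desired inequality follows. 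The principal obstacle lies here: the matching upper bound $\depth S/(\mathfrak{a}L)\le\depth S/L-|N_G(x)|+1$ requires that the connecting map $\operatorname{Ext}^{d-s}(K,S/(\mathfrak{a}+L))\to\operatorname{Ext}^{d-s+1}(K,S/\mathfrak{a}L)$, with $d=\depth S/L$ and $s=|N_G(x)|$, be injective; this uses the vanishing $\operatorname{Ext}^{d-s}(K,S/\mathfrak{a})=0$, itself a consequence of $d<n$.
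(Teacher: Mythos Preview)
Your overall architecture matches the paper's: compute $J\cap L=\mathfrak{a}L$, then verify the two $\Tor$-vanishings via Proposition~\ref{Prop:CriterionBettiSplit}, then read off the depth bound. Your derivation of~(\ref{eq:BettiSaha}) and your map $\varphi_L$ are fine, and your Mayer--Vietoris computation of $\depth S/(\mathfrak{a}L)=\depth S/L-|N_G(x)|+1$ is in fact a proof of the identity the paper simply asserts.

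The gap is in your map $\varphi_J$. Writing a generator of $J\cap L$ as $yv_1\cdots v_k$ already involves a non-canonical choice of $k$-matching, and ``drop $v_k$'' involves a further choice; more seriously, your claim that some $v_k$-variable is absent from $\lcm(\varphi_J(\Omega))$ is not automatic. Concretely, take $k=3$, $N_G(x)=\{y_1,y_2\}$, and let $G\setminus N_G[x]$ be the graph on $\{1,\dots,6\}$ with edges $12,34,56,16,25$. Then $L=(x_1\cdots x_6)$ and $J\cap L=(y_1x_1\cdots x_6,\,y_2x_1\cdots x_6)$. Representing $w_1=y_1x_1\cdots x_6$ via the matching $\{12,34,56\}$ and dropping $56$, and $w_2=y_2x_1\cdots x_6$ via $\{16,25,34\}$ and dropping $34$, gives $\varphi_J(w_1)=y_1x_1x_2x_3x_4$ and $\varphi_J(w_2)=y_2x_1x_2x_5x_6$, so that $\lcm(\varphi_J(\{w_1,w_2\}))=y_1y_2x_1\cdots x_6=\lcm(\{w_1,w_2\})$ and the criterion of Proposition~\ref{Prop:CriterionBettiSplit} fails. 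Thus the claim requires a \emph{specific} coherent choice, and your sentence does not supply one.

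The paper avoids this by not writing $\varphi_J$ explicitly. It observes that $\partial^*I(G\setminus N_G[x])^{[k]}\subseteq I(G\setminus N_G[x])^{[k-1]}\subseteq\sum_i I(G\setminus\{x,y_i\})^{[k-1]}$ and invokes Lemma~\ref{Lemma:EsistenzaVarphi} to obtain a map $\varphi$ on $\mathcal{G}(L)$ satisfying the $\lcm$ condition (the underlying construction removes the \emph{largest variable}, which guarantees the needed strict divisibility); it then sets $\Phi(y_iu)=y_i\varphi(u)$ and checks the $\lcm$ condition for $\Phi$ by separating the $y$-part from the $\varphi$-part. You can repair your argument the same way: define $\varphi_J(yu)=y\varphi(u)$ with $\varphi$ coming from Lemma~\ref{Lemma:EsistenzaVarphi}, rather than an ad hoc ``drop the last edge'' rule.
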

	\begin{proof}
		The equation (\ref{eq:BettiSaha}) was proved in \cite[Lemma 2.9(ii)]{DRS24}. To simplify our discussion, we set $I=(I(G)^{[k]}:x)$, $J=\sum_{y\in N_G(x)} yI(G\setminus \{x,y\})^{[k-1]}$, $L=I(G\setminus N_G[x])^{[k]}$, and we assume, up to a relabeling, that $N_G(x)=\{y_1,\dots,y_m\}$.
		
		Firstly, we notice that $\mathcal{G}(I)$ is the disjoint union of $\mathcal{G}(J)$ and $\mathcal{G}(L)$. Indeed, all the minimal generators of $J$ are monomials of degree $2k-1$ and are divided by some variable $y_i$, whereas each minimal generator of $L$ is a monomial of degree $2k$ and is not divided by $y_i$, for all $1\le i\le m$.
		
		Now, we compute $J\cap L$. We have
		\begin{align*}
			J\cap L\ &=\ (\sum_{i=1}^m y_iI(G\setminus \{x,y_i\})^{[k-1]})\cap I(G\setminus N_G[x])^{[k]}\\
			&=\ \sum_{i=1}^my_i[I(G\setminus \{x,y_i\})^{[k-1]}\cap I(G\setminus N_G[x])^{[k]}]\\
			&=\ \sum_{i=1}^my_iI(G\setminus N_G[x])^{[k]}\ =\ (y_1,\dots,y_m)I(G\setminus N_G[x])^{[k]}.
		\end{align*}
		
		Here, in the second equality we used that $y_i$ does not divide any generator of $I(G\setminus N_G[x])^{[k]}$, and in the third equality we used, for $1\le i\le m$, the chain of inclusions $I(G\setminus N_G[x])^{[k]}\subset I(G\setminus N_G[x])^{[k-1]}\subseteq I(G\setminus\{x,y_i\})^{[k-1]}$.
		
		To prove that the inclusion map $J\cap L\rightarrow J\oplus L$ is $\Tor$-vanishing, it is enough to show that the inclusion maps $J\cap L\rightarrow J$ and $J\cap L\rightarrow L$ are $\Tor$-vanishing.
		
		Notice that $\partial^* I(G\setminus N_G[x])^{[k]}\subset I(G\setminus N_G[x])^{[k-1]}\subseteq \sum_{i=1}^mI(G\setminus\{x,y_i\})^{[k-1]}$. Therefore, by Lemma \ref{Lemma:EsistenzaVarphi}, there exists a map
		$$
		\varphi\ :\ \mathcal{G}(I(G\setminus N_G[x])^{[k]})\rightarrow\mathcal{G}(\sum_{i=1}^mI(G\setminus\{x,y_i\})^{[k-1]})
		$$
		such that for every non-empty subset $\Omega\subseteq\mathcal{G}(I(G\setminus N_G[x])^{[k]})$ we have
		$$
		\lcm(u:u\in\Omega)\in\m(\lcm(\varphi(u):u\in\Omega)),
		$$
		where $\m$ is the graded maximal ideal of $S$.
		
		Each minimal generator of $J\cap L=(y_1,\dots,y_m)L$ is of the form $y_iu$ with $1\le i\le m$ and $u\in\mathcal{G}(I(G\setminus N_G[x])^{[k]})$. Now we define a map $\Phi:\mathcal{G}(J\cap L)\rightarrow\mathcal{G}(J)$ by setting $\Phi(y_iu)=y_i\varphi(u)$. It is clear that $\Phi(y_iu)\in\mathcal{G}(J)$. Let $\Omega=\{y_{i_\ell}u_\ell\}_\ell\subseteq\mathcal{G}(J\cap L)$ be non-empty. Then
		\begin{align*}
			\lcm(\{y_{i_\ell}u_\ell\}_\ell)=\lcm(\{y_{i_\ell}\}_\ell)\lcm(\{u_\ell\}_\ell)\ &\in\ \lcm(\{y_{i_\ell}\}_\ell)\cdot\m(\lcm(\{\varphi(u_\ell)\}_\ell))\\
			&=\ \m(\lcm(\{\Phi(y_{i_\ell}u_{\ell})\}_\ell)).
		\end{align*}
		Thus, Proposition \ref{Prop:CriterionBettiSplit} implies that $J\cap L\rightarrow J$ is $\Tor$-vanishing.
		
		To prove that $J\cap L\rightarrow L$ is $\Tor$-vanishing, notice that $J\cap L=(y_1,\dots,y_m)L$ and that the variables $y_i$ do not divide any minimal generator of $L$. Therefore, we can define a map $\Psi:\mathcal{G}(J\cap L)\rightarrow\mathcal{G}(L)$ by setting $\Psi(y_iu)=u$ for all $1\le i\le m$ and all $u\in\mathcal{G}(L)$. It is clear that this map satisfies the condition in Proposition \ref{Prop:CriterionBettiSplit}, and so the map $J\cap L\rightarrow L$ is $\Tor$-vanishing.
		
		Hence, equation (\ref{eq:BettiSaha}) is indeed a Betti splitting. It follows from equation (\ref{eq:BettiSplitEq}) that
		$$
		\depth S/I\ =\ \min\{\depth S/J,\depth S/L,\depth S/(J\cap L)-1\}.
		$$
		
		Since $J\cap L=(y_1,\dots,y_m)L$, we have $\depth S/(J\cap L)=\depth S/L-(m-1)$. Therefore, it follows that
		$$
		\depth S/L-m=\depth S/(J\cap L)-1\ge\depth S/I.
		$$
		Since $m=|N_G(x)|$, the proof is complete.
	\end{proof}
	
	As a consequence, we obtain the following useful permanence property.
	
	\begin{Theorem}\label{neighbourhood}
		Let $G$ be a graph and $1\le k\le\nu(G)$ be an integer. If $I(G)^{[k]}$ is Cohen-Macaulay, then $I(G\setminus N_G[x])^{[k]}$ is Cohen-Macaulay, for any vertex $x\in V(G)$.
	\end{Theorem}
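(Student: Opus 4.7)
Set $I=I(G)^{[k]}$, $L=I(G\setminus N_G[x])^{[k]}$, and $m=|N_G(x)|$. My plan is to combine the Betti-splitting depth estimate of Lemma~\ref{Lem:Betti-Saha} with Lemma~\ref{CHHKTT} on colon ideals, and then complement it with a short combinatorial upper bound on $\dim S/L$.

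Since $I$ is generated in degree $2k\ge 2$, the variable $x$ does not lie in $I$. Hence Lemma~\ref{CHHKTT} applied with $u=x$ yields that $S/(I:x)$ is Cohen-Macaulay and
$$
\depth S/(I:x)\ =\ \depth S/I\ =\ \dim S/I,
$$
where the last equality uses the Cohen-Macaulay hypothesis on $S/I$. Feeding this into Lemma~\ref{Lem:Betti-Saha} gives
$$
\depth S/L\ \ge\ \depth S/(I:x)+m\ =\ \dim S/I+m.
$$

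To close the argument I would prove the matching dimension inequality $\dim S/L\le \dim S/I+m$ by a minimal vertex cover argument. If $F\subseteq V(G)\setminus N_G[x]$ is a minimal vertex cover of the hypergraph whose edges are the supports of the $k$-matchings of $G\setminus N_G[x]$, so that $|F|=\height L$, then $F\cup N_G(x)$ is a vertex cover of the corresponding hypergraph of $G$: any $k$-matching $M$ of $G$ either contains an edge meeting $N_G[x]$ (and thus meeting $N_G(x)$, since $x$ itself cannot appear without a neighbour) or consists entirely of edges in $G\setminus N_G[x]$ (and thus meets $F$). Hence $\height I\le \height L+m$, which is equivalent to $\dim S/L\le \dim S/I+m$.

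Putting the two bounds together gives $\depth S/L\ge \dim S/I+m\ge \dim S/L$, and since always $\depth S/L\le\dim S/L$, equality holds throughout, so $S/L$ is Cohen-Macaulay. The main obstacle is the dimension step: one must keep careful track that $L$ is being viewed as an ideal in the full ring $S$ rather than in $K[V(G)\setminus N_G[x]]$, and it is exactly this bookkeeping that makes the same shift by $m$ appear symmetrically on both sides, in the depth bound (coming from the Betti splitting of $(I:x)$) and in the dimension bound (coming from enlarging a vertex cover by $N_G(x)$).
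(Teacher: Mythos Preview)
Your proof is correct and follows essentially the same route as the paper's: both combine Lemma~\ref{CHHKTT} (applied to the colon by $x$) with the depth inequality of Lemma~\ref{Lem:Betti-Saha}, and then close the loop via the height inequality $\height I(G)^{[k]}\le\height I(G\setminus N_G[x])^{[k]}+|N_G(x)|$. Your vertex-cover phrasing of that last step is just a combinatorial restatement of the paper's prime-ideal argument, and your explicit remark that $x\notin I$ (needed to invoke Lemma~\ref{CHHKTT}) is a useful clarification the paper leaves implicit.
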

	
	\begin{proof}
		Consider a vertex $x\in V(G)$. We may assume that $I(G\setminus N_G[x])^{[k]}\neq 0$. Then by Lemma \ref{Lem:Betti-Saha} we have $$\depth S/I(G\setminus N_G[x])^{[k]}-|N_G(x)|\geq  \depth S/(I(G)^{[k]}:x).$$ Since $I(G)^{[k]}$ is Cohen-Macaulay, by Lemma~\ref{CHHKTT} we have 
		$$
		\depth S/(I(G)^{[k]}:x)= \depth S/I(G)^{[k]}.
		$$
		
		From this fact and the assumption that $I(G)^{[k]}$ is Cohen-Macaulay, we get that $\dim S/I(G\setminus N_G[x])^{[k]}-|N_G(x)|\geq \dim S/I(G)^{[k]}$. We show that indeed 
		\begin{equation}\label{eq:dimneighborhood}
			\dim S/I(G\setminus N_G[x])^{[k]}-|N_G(x)|=\dim S/I(G)^{[k]}.
		\end{equation}
		
		Let $N_G(x)=\{y_1,\ldots,y_m\}$, and let $P\subset S$ be a monomial prime ideal such that $I(G\setminus N_G[x])^{[k]}\subseteq P$. Then one can see that $I(G)^{[k]}\subset(y_1,\ldots,y_m,P)$. Since any minimal prime ideal of $I(G\setminus N_G[x])^{[k]}$ is a monomial ideal generated by variables, it follows that $\height\,I(G)^{[k]}\leq \height\,I(G\setminus N_G[x])^{[k]}+|N_G(x)|$. This shows the equality~(\ref{eq:dimneighborhood}).
		Thus, we obtain
		\begin{align*}
			\depth S/I(G\setminus N_G[x])^{[k]}\ &\ge\ \depth S/I(G)^{[k]}+|N_G(x)|\ =\ \dim S/I(G)^{[k]}+|N_G(x)|\\
			&=\ \dim S/I(G\setminus N_G[x])^{[k]}\ \ge\ \depth S/I(G\setminus N_G[x])^{[k]}.
		\end{align*}
		Hence $I(G\setminus N_G[x])^{[k]}$ is Cohen-Macaulay, as desired.
	\end{proof}

	The following result is an immediate consequence of Theorem~\ref{neighbourhood}.
	
	\begin{Corollary}\label{ConnCom}
		Let $G$ be a graph with the connected components $G_1,\ldots,G_m$, and let $1\le k\le\nu(G)$ be an integer. If $I(G)^{[k]}$ is Cohen-Macaulay, then $I(G_i)^{[k]}$ is Cohen-Macaulay for all $i$. 
	\end{Corollary}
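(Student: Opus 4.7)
The plan is to reduce the corollary to Theorem~\ref{neighbourhood} by iteratively stripping away vertices that lie outside the target component. Fix an index $i$ and argue by induction on $|V(G)|-|V(G_i)|$, the number of vertices of $G$ not belonging to $G_i$.

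The base case $|V(G)|=|V(G_i)|$ is trivial, since then $G=G_i$. For the inductive step, pick any vertex $x\in V(G)\setminus V(G_i)$ and let $G_j$ be the connected component of $G$ containing $x$, so that $j\ne i$. Since $G_j$ is a connected component of $G$, we have $N_G[x]\subseteq V(G_j)$. Consequently, no vertex or edge of $G_i$ is affected when we pass from $G$ to $G':=G\setminus N_G[x]$, and $G_i$ remains a connected component of $G'$ (alongside $G_\ell$ for $\ell\ne i,j$ and whatever components survive inside $G_j$). Theorem~\ref{neighbourhood} tells us that $I(G')^{[k]}$ is Cohen-Macaulay, and since $|V(G')|-|V(G_i)|<|V(G)|-|V(G_i)|$, applying the inductive hypothesis to $G'$ with $G_i$ as the distinguished component yields that $I(G_i)^{[k]}$ is Cohen-Macaulay.

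The only minor point that needs attention is the constraint $1\le k\le\nu(\cdot)$ in the hypothesis of Theorem~\ref{neighbourhood}. If at some stage of the induction $k>\nu(G')$, then $I(G')^{[k]}=0$ and the Cohen-Macaulay conclusion is automatic; the same remark handles the case $k>\nu(G_i)$, which makes the corollary trivial. I do not foresee a genuine obstacle — the entire argument is a short induction powered by the observation that removing $N_G[x]$ for $x\notin V(G_i)$ strictly decreases $|V(G)|-|V(G_i)|$ while leaving $G_i$ intact as a connected component.
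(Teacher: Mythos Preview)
Your proof is correct and follows the same approach the paper has in mind: the paper simply states that the corollary ``is an immediate consequence of Theorem~\ref{neighbourhood}'', and your induction on $|V(G)|-|V(G_i)|$ makes this deduction explicit. Your handling of the edge case $k>\nu(G')$ (hence $k>\nu(G_i)$, so $I(G_i)^{[k]}=0$) is also correct.
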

	
	The converse of Corollary~\ref{ConnCom} does not hold in general.
	
	\begin{Example}
		\rm Let $G$ be a graph with the connected components $K_2$ and $K_3$, where $K_n$ denotes the complete graph on $n$ vertices. Then $I(G)$ is Cohen-Macaulay, but $I(G)^{[2]}$ is not Cohen-Macaulay, since $\dim S/I(G)^{[2]}=4$ and $\depth S/I(G)^{[2]}=3$. 
	\end{Example}
	
	Next, we characterize those edge ideals $I(G)$ having the property that their last non-vanishing matching power is Cohen-Macaulay.
	
	It is clear that for any graph $G$ on $n$ non-isolated vertices we have $\nu(G)\le\left\lfloor\frac{n}{2}\right\rfloor$.
	
	A {\em perfect matching} in  $G$ is a matching $M$ of $G$ such that each vertex of $G$ is an endpoint of some edge in $M$. So $G$ has a perfect matching if and only if $n=2\nu(G)$.
	
	\begin{Theorem}\label{Thm:I(G)[nu(G)]CM}
		Let $G$ be a graph on $n$ non-isolated vertices. The following conditions are equivalent.
		\begin{enumerate}
			\item[\textup{(a)}] $I(G)^{[\nu(G)]}$ is Cohen-Macaulay.
			\item[\textup{(b)}] $I(G)^{[\nu(G)]}=\m^{[2\nu(G)]}$ and $\nu(G)=\lfloor\frac{n}{2}\rfloor$.
			\item[\textup{(c)}] Either $G$ has a perfect matching, or $G\setminus\{i\}$ has a perfect matching of size $|V(G)|-1$ for all $i\in V(G)$.
		\end{enumerate}
	\end{Theorem}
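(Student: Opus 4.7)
The plan is to establish the cycle $(\textup{c}) \Rightarrow (\textup{b}) \Rightarrow (\textup{a}) \Rightarrow (\textup{c})$. The first two links are quick. For $(\textup{c}) \Rightarrow (\textup{b})$: if $G$ has a perfect matching then $n = 2\nu(G)$ and the only squarefree monomial of degree $n$ on $n$ variables is $x_1\cdots x_n$, so $I(G)^{[\nu(G)]} = (x_1\cdots x_n) = \mathfrak{m}^{[n]}$; in the other case $n$ is odd with $\nu(G) = (n-1)/2$, every $\mathbf{x}_{V(G)\setminus\{i\}}$ is a generator, and these exhaust $\mathfrak{m}^{[n-1]}$. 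For $(\textup{b}) \Rightarrow (\textup{a})$: $\mathfrak{m}^{[d]}$ is the Stanley-Reisner ideal of the $(d-2)$-skeleton of the $(n-1)$-simplex, which is pure shellable and hence Cohen-Macaulay.

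The main work is $(\textup{a}) \Rightarrow (\textup{c})$. Set $t = \nu(G)$ and assume $I(G)^{[t]}$ is Cohen-Macaulay. Since $I(G)^{[t]}$ is a squarefree polymatroidal (matroidal) ideal by \cite{EF, EF1}, the Herzog-Hibi classification of Cohen-Macaulay polymatroidal ideals \cite{HH2006} forces $I(G)^{[t]} = \mathfrak{m}_W^{[2t]}$ for some subset $W$ of the variables (the degree is $2t$ because the generators of $I(G)^{[t]}$ have degree $2t$). A short augmenting-path swap shows that every non-isolated vertex belongs to some max matching: if $M$ misses $v$ and $w \in N_G(v)$, then $w \in V(M)$ (else $M\cup\{\{v,w\}\}$ is larger), and swapping the edge of $M$ at $w$ for $\{v,w\}$ produces a max matching containing $v$. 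Hence $W = V(G)$ and $I(G)^{[t]} = \mathfrak{m}^{[2t]}$.

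Now every $2t$-subset $F$ of $V(G)$ satisfies $\mathbf{x}_F \in I(G)^{[t]}$, equivalently $G[F]$ has a perfect matching of size $t$. If $n = 2t$ this gives a perfect matching of $G$. Otherwise $n > 2t$, and I claim $n = 2t + 1$: if instead $n \geq 2t + 2$, pick any edge $\{a, b\}$ of $G$ (which exists since $t \geq 1$) and any $2t$-subset $F \subseteq V(G) \setminus \{a, b\}$; the perfect matching of $G[F]$ together with $\{a, b\}$ would be a $(t+1)$-matching of $G$, contradicting $\nu(G) = t$. Hence $n = 2t + 1$, and each $V(G) \setminus \{i\}$ is a $2t$-subset, so $G\setminus\{i\}$ has a perfect matching of size $(n-1)/2$ for every $i$, giving (c).

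The main obstacle is the use of the Herzog-Hibi classification: crystallizing the CM hypothesis as $I(G)^{[t]} = \mathfrak{m}^{[2t]}$ is the critical step, after which the combinatorial consequences follow easily. An alternative, self-contained route would invoke Eagon-Reiner duality (so that both $I(G)^{[t]}$ and its Alexander dual have linear resolution) and a detailed study of the minimal vertex covers of the max-matching hypergraph, possibly combined with iterated use of Lemma \ref{Lem:Betti-Saha} and Theorem \ref{neighbourhood}; this would be substantially longer.
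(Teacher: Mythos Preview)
Your proof is correct and follows the same backbone as the paper's: both use that $I(G)^{[\nu(G)]}$ is squarefree polymatroidal \cite{EF,EF1} and then invoke the Herzog--Hibi classification \cite{HH2006} to force it to be a squarefree Veronese, followed by the identical swapping argument to see that every variable appears. The one genuine difference is in deducing $\nu(G)=\lfloor n/2\rfloor$ when $2\nu(G)<n$: the paper appeals to \cite[Proposition~1.3]{EHHM2022b} (the equality $I(G)^{[k]}=I(K_n)^{[k]}$ propagates to all $\ell\ge k$, forcing $\m^{[2\ell]}=0$ for $\ell>\nu(G)$), whereas you give a direct combinatorial contradiction (if $n\ge 2t+2$, adjoin a spare edge $\{a,b\}$ to a perfect matching of some $2t$-subset of $V(G)\setminus\{a,b\}$). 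Your argument here is more self-contained and avoids the extra citation; otherwise the two proofs coincide.
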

	\begin{proof}
		(a) $\Rightarrow$ (b) By \cite[Theorem 1.7]{EF} (or also \cite[Theorem 1.1]{EF1}), $I(G)^{[\nu(G)]}$ is a squarefree polymatroidal ideal. By Herzog and Hibi \cite[Theorem 4.2]{HH2006} (see also \cite[Corollary 3]{CF2024}) the only squarefree Cohen-Macaulay polymatroidal ideals are the squarefree principal ideals and the squarefree Veronese ideals $\m^{[s]}$ for $1\le s\le n$. Then, our assumption implies that $I(G)^{[\nu(G)]}$ is either a squarefree principal ideal, or a squarefree Veronese ideal generated in degree $2\nu(G)$.
		
		Let $k=\nu(G)$. We claim that $k=\lfloor\frac{n}{2}\rfloor$ and $I(G)^{[k]}=\m^{[2k]}$.
		
		To this end, we first show that any variable divides some generator of $I(G)^{[k]}$. Up to relabeling, we may assume that $M=\{e_i:1\le i\le k\}$, with $e_i=\{2i-1,2i\}$ for all $i$, is a $k$-matching of $G$. If $2k=n$, the assertion is proved. Suppose $2k<n$. Thus it remains to show that $x_v$ divides some generator of $I(G)^{[k]}$ for all $2k< v\le n$. There exists $w\in V(G)$ with $\{v,w\}\in E(G)$. We have $w\in V(M)$, otherwise $M\cup\{\{v,w\}\}$ would be a $(k+1)$-matching of $G$, which is absurd because $\nu(G)=k$. Thus $w\in e_i$ for some $i$. Hence $M'=(M\setminus e_i)\cup\{\{v,w\}\}$ is a $k$-matching of $G$, and the generator of $I(G)^{[k]}$ corresponding to this matching is divided by $x_v$.\smallskip
		
		Now, if $2k=n$ then $I(G)^{[k]}=(x_1x_2\cdots x_n)=\m^{[n]}=\m^{[2k]}$ and $k=\frac{n}{2}=\lfloor\frac{n}{2}\rfloor$. Suppose now $2k<n$. Then, since all variables divide some generator of $I(G)^{[k]}$, it follows that $I(G)^{[k]}$ is not principal. Thus, by the first paragraph of the proof, $I(G)^{[k]}$ is the squarefree Veronese ideal $\m^{[2k]}$ of degree $2k$ in the variables $x_1,\dots,x_n$. It remains to show that $k=\lfloor\frac{n}{2}\rfloor$. Let $K_n$ be the complete graph on $n$ vertices. Then $I(G)\subseteq I(K_n)$ and $I(G)^{[k]}=\m^{[2k]}=I(K_n)^{[k]}$. Thus, \cite[Proposition 1.3]{EHHM2022b} implies that $I(G)^{[\ell]}=I(K_n)^{[\ell]}=\m^{[2\ell]}$ for all $\ell\ge k$. Since $k=\nu(G)$, we have $I(G)^{[\ell]}=0$ for all $\ell>k$. Hence $\m^{[2\ell]}=0$ for all $\ell>k$, and so $k=\lfloor\frac{\nu(\m)}{2}\rfloor=\lfloor\frac{n}{2}\rfloor$.\smallskip
		
		(b) $\Rightarrow$ (a) Conversely, if $\nu(G)=\lfloor\frac{n}{2}\rfloor$ and $I(G)^{[\nu(G)]}=\m^{[2\nu(G)]}$, then $I(G)^{[\nu(G)]}$ is Cohen-Macaulay by the result of Herzog and Hibi \cite[Theorem 4.2]{HH2006}.\smallskip
		
		(b) $\Leftrightarrow$ (c) Let $k=\lfloor\frac{n}{2}\rfloor$. Then either $n=2k$ or $n=2k+1$. Depending on these cases, we see that $I(G)^{[\nu(G)]}=\m^{[2k]}$ if and only if statement (c) holds.
	\end{proof}
	
	\section{Graphs with perfect matchings and very well-covered graphs}\label{sec2}
	
	To study our question on Cohen-Macaulay matching powers and taking Theorem~\ref{Thm:I(G)[nu(G)]CM} into account, it makes sense to consider the family of graphs which have a perfect matching. This is our focus in this section. It turns out that the Cohen-Macaulayness of the $(\nu(G)-1)$th matching power for this class of graphs is crucial in order to get the answer for all matching powers.
	Theorem~\ref{Thm:I(G)[k]CM-perfect} shows this fact. In order to prove our result we use the Hilbert-Burch Theorem and the next elementary result of graph theory.
	\begin{Lemma}\label{Lem:elementary}
		Let $G$ be a connected graph on $n$ non-isolated vertices. Then $$|E(G)|\ge n-1,$$ and equality holds if and only if $G$ is a tree.
	\end{Lemma}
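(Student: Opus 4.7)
I would prove the inequality and the equality case together by induction on $n$. The base case is $n = 2$: the only connected graph on two non-isolated vertices is $K_2$, which has exactly one edge and is itself a tree, so both parts of the statement hold trivially.

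For the inductive step, assume $n \geq 3$. I would split into two cases according to whether $G$ has a vertex of degree one. If every vertex of $G$ has degree at least two, the handshake lemma gives $2|E(G)| = \sum_{v \in V(G)} \deg_G(v) \geq 2n$, hence $|E(G)| \geq n > n-1$; the inequality is strict, and $G$ is not a tree (since every finite tree on at least two vertices has a leaf), so the equality case is vacuous. If instead $G$ has a leaf $v$ with unique neighbor $w$, consider $G' := G \setminus \{v\}$. Removing a leaf preserves connectedness, so $G'$ is connected on $n-1 \geq 2$ vertices, and $G'$ has no isolated vertex: otherwise $w$ would be isolated in $G'$, forcing $\{v,w\}$ to be a connected component of $G$ of size two, which contradicts $G$ being connected with $n \geq 3$. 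The inductive hypothesis applied to $G'$ then gives $|E(G')| \geq n-2$, whence $|E(G)| = |E(G')| + 1 \geq n-1$.

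For the equality case within the leaf-containing branch, $|E(G)| = n-1$ is equivalent to $|E(G')| = n-2$, which by induction is equivalent to $G'$ being a tree. Since $G$ is obtained from $G'$ by attaching a pendant edge, $G$ is acyclic if and only if $G'$ is, so $G$ is a tree precisely when $G'$ is. This completes the induction.

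There is no genuine obstacle in the argument; the statement is a classical textbook fact. The only mild technical point worth flagging is verifying that leaf-removal stays within the class under consideration (connected graphs with no isolated vertices), and this is precisely what the hypothesis $n \geq 3$ in the inductive step is used for. An equally clean alternative would be to extract a spanning tree of $G$ and invoke the fact, proved by the same leaf-removal induction, that any tree on $n$ vertices has exactly $n-1$ edges; the inequality is then immediate and equality forces $G$ to coincide with its spanning tree.
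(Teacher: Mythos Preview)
Your proof is correct. The paper does not supply a proof of this lemma at all: it is stated as ``the next elementary result of graph theory'' and used without justification. Your induction on $n$ with the leaf/no-leaf dichotomy is a standard and complete argument, and the alternative you mention (extract a spanning tree) is equally valid.
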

	
	We now state and prove our main result in the section.
	\begin{Theorem}\label{Thm:I(G)[k]CM-perfect}
		Let $G$ be a graph with a perfect matching of size $n\ge2$ and such that $\dim S/I(G)^{[n-1]}=2n-2$. Then, the following conditions are equivalent.
		\begin{enumerate}
			\item[\textup{(a)}] $I(G)^{[k]}$ is Cohen-Macaulay, for all $1\le k\le\nu(G)=n$.
			\item[\textup{(b)}] $G$ is a Cohen-Macaulay forest.
		\end{enumerate}
	\end{Theorem}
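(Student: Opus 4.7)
The implications $(a)\Rightarrow(b)$ and $(c)\Rightarrow(a)$ are routine: the first follows by taking $k=1$ and $k=n-1$, while the second is the theorem of Das, Roy and Saha cited in the introduction, applicable to every Cohen-Macaulay forest. So the main task is $(b)\Rightarrow(c)$, and by Corollary~\ref{ConnCom} we may reduce to the connected case.

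The plan for $(b)\Rightarrow(c)$ is to apply the Hilbert-Burch theorem to the grade-$2$ Cohen-Macaulay ideal $I(G)^{[n-1]}$, yielding a minimal free resolution
$$
0\longrightarrow S^{\mu-1}\longrightarrow S^{\mu}\longrightarrow I(G)^{[n-1]}\longrightarrow 0,
$$
where $\mu$ is the number of minimal monomial generators. Writing each such generator as $T/(x_{i}x_{j})$ with $T=x_{1}\cdots x_{2n}$, the pairs $\{i,j\}$ occurring are precisely those for which $G\setminus\{i,j\}$ admits a perfect matching. Let $H$ be the graph on $V(G)$ whose edges are these pairs, so that $\mu=|E(H)|$.

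Next we compute $\beta_{1}(I(G)^{[n-1]})=\mu-1$ in a second way, via a multigraded Betti analysis. Since $I(G)^{[n-1]}$ is squarefree, minimal first syzygies occur only in squarefree multidegrees, and one checks that the only possibilities are $\sigma=V(G)\setminus\{v\}$ and $\sigma=V(G)$. For $\sigma=V(G)\setminus\{v\}$ the minimal syzygies are linear and correspond to pairs of edges of $H$ incident to $v$, contributing $d_{H}(v)-1$ minimal syzygies (one uses here that the unmixedness of $I(G)^{[n-1]}$ forces $H$ to have no isolated vertices). For $\sigma=V(G)$ the minimal syzygies are quadratic and contribute $c_{H}-1$, where $c_{H}$ is the number of connected components of $H$. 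Summing and equating with $\mu-1$ yields $\mu=2n-c_{H}$; in particular $H$ is a forest.

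Finally, fix a perfect matching $M$ of $G$ and define $\phi\colon E(G)\to E(H)$ by $\phi(e)=e$ for $e\in M$, and $\phi(\{i,j\})=\{i',j'\}$ otherwise, where $i'$ and $j'$ denote the $M$-partners of $i$ and $j$. That $\phi(\{i,j\})$ lies in $E(H)$ follows because the set of edges obtained from $M$ by replacing $\{i,i'\}$ and $\{j,j'\}$ with $\{i,j\}$ is a perfect matching of $G\setminus\{i',j'\}$, and injectivity of $\phi$ is straightforward from the uniqueness of $M$-partners. Hence $|E(G)|\le|E(H)|=2n-c_{H}\le 2n-1$, and since $G$ is connected Lemma~\ref{Lem:elementary} forces equality, so $G$ is a tree. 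Combined with the Cohen-Macaulayness of $I(G)$, this shows that $G$ is a Cohen-Macaulay forest. The main difficulty lies in the multigraded syzygy computation of the third paragraph.
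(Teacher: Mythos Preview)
Your argument in the connected case is correct, and it shares with the paper both the use of the Hilbert--Burch resolution $0\to S^{\mu-1}\to S^\mu\to I(G)^{[n-1]}\to 0$ and the injection $E(G)\hookrightarrow\mathcal G(I(G)^{[n-1]})$ (your $\phi$ is exactly the paper's $\Phi$, written without the $x_i,y_i$ relabeling). The difference lies in how the bound $\mu\le 2n-1$ is obtained. The paper does this in one line: since the resolution is minimal, every nonzero entry of the Hilbert--Burch matrix has positive degree, so each $(\mu-1)$-minor has degree at least $\mu-1$; as these minors generate an ideal equigenerated in degree $2n-2$, one gets $\mu-1\le 2n-2$. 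Your multigraded syzygy count is correct and in fact yields the sharper identity $\mu=2n-c_H$ (equivalently, $\beta_{2,V(G)}(I)=\dim_K\tilde H_1(H)=0$, so $H$ is itself a forest), but this extra precision is not needed and the argument is considerably longer than the paper's degree bound.

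There is one genuine gap: the reduction to the connected case via Corollary~\ref{ConnCom} does not work as stated. That corollary transfers Cohen--Macaulayness of $I(G)^{[n-1]}$ to $I(G_i)^{[n-1]}$, but to invoke the connected case for a component $G_i$ you would need $I(G_i)^{[\nu(G_i)-1]}$ Cohen--Macaulay together with the dimension hypothesis for $G_i$; when $G$ is disconnected one has $\nu(G_i)-1<n-1$, and neither condition follows. The clean fix is to drop the reduction and argue directly: if $G\setminus\{i,j\}$ has a perfect matching then $i$ and $j$ lie in the same component of $G$, so every edge of $H$ stays within a component of $G$ and hence $c_H\ge c_G$. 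Combining this with your inequalities $2n-c_G\le |E(G)|\le\mu=2n-c_H$ forces $|E(G)|=2n-c_G$, i.e.\ $G$ is a forest, and then Cohen--Macaulayness of $I(G)$ finishes the proof.
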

	
	\begin{proof}
		The implication (b) $\Rightarrow$ (a) is a particular case of \cite[Corollary 3.6]{DRS24}.  
		
		(a) $\Rightarrow$ (b) First we prove the assertion for the case that $G$ is a connected graph. 
		By assumption on the dimension we get $\height\,I(G)^{[n-1]}=2$. Thus, $I(G)^{[n-1]}$ is a perfect ideal of grade 2, and by the Hilbert-Burch Theorem \cite[Theorem 1.4.17]{BH}, the minimal (multi)graded free resolution of $I(G)^{[n-1]}$ is of the form
		$$
		\FF\ \ :\ \ 0\rightarrow S^{m-1}\xrightarrow{X}S^{m}\rightarrow I(G)^{[n-1]}\rightarrow 0,
		$$
		where $m$ is the minimal number of generators of $I(G)^{[n-1]}$, the matrix $X$ represents the homomorphism $S^{m-1}\rightarrow S^m$, and $I_{m-1}(X)=I(G)^{[n-1]}$. Here, for an integer $t$ and a matrix $A$, we denote by $I_t(A)$ the ideal generated by all $t$-minors of $A$. 
		
		We claim that $m\ge 2n-1$. To this end, we show that there is an injective map
		$$
		\Phi\ :\ E(G)\rightarrow\mathcal{G}(I(G)^{[n-1]}).
		$$
		
		By Lemma \ref{Lem:elementary}, $|E(G)|\ge|V(G)|-1=2n-1$. Thus if we show the existence of the map $\Phi$, then $m\ge|E(G)|\ge2n-1$ as desired.
		
		Let ${\bf x}=x_1x_2\cdots x_n$ and ${\bf y}=y_1y_2\cdots y_n$. 
		We define $\Phi$ as follows:
		
		\begin{align*}
			\textup{(i)}&& \Phi(\{x_i,y_i\})\ &=\ ({\bf xy})/(x_iy_i)\ =\ \,\prod_{\ell\ne i}(x_\ell y_\ell)\in\mathcal{G}(I(G)^{[n-1]}),\\
			\textup{(ii)}&& \Phi(\{x_i,y_j\})\ &=\ ({\bf xy})/(y_ix_j)\ =\ (x_iy_j)\prod_{\ell\ne i,j}(x_\ell y_\ell)\in\mathcal{G}(I(G)^{[n-1]}),\\
			\textup{(iii)}&& \Phi(\{x_i,x_j\})\ &=\ ({\bf xy})/(y_iy_j)\ =\ (x_ix_j)\prod_{\ell\ne i,j}(x_\ell y_\ell)\in\mathcal{G}(I(G)^{[n-1]}),\\
			\textup{(iv)}&& \Phi(\{y_i,y_j\})\ &=\ ({\bf xy})/(x_ix_j)\ =\ (y_iy_j)\prod_{\ell\ne i,j}(x_\ell y_\ell)\in\mathcal{G}(I(G)^{[n-1]}).
		\end{align*}
		
		It is clear that the above monomials are pairwise distinct. Hence $\Phi$ is injective.
		
		Since $m\ge 2n-1$ and each non-zero entry of $X$ is an homogeneous element of degree at least one, we see that each non-zero $(m-1)$-minor of $X$ has degree at least $m-1\ge 2n-2$. Since $I(G)^{[n-1]}$ is equigenerated in degree $2(n-1)$, we then see that the equality $I_{m-1}(X)=I(G)^{[n-1]}$ implies that $m=2n-1$ and that each entry of $X$ is linear (that is, of degree one). By the inequalities $m\ge|E(G)|\ge2n-1$ we obtain $|E(G)|=2n-1$. So, Lemma \ref{Lem:elementary} implies that $G$ is a Cohen-Macaulay tree.
		
		Now, suppose that $G$ has connected components $G_1,\ldots,G_r$. We show that each $G_i$ is a tree. Since $G$ has a perfect matching, each $G_i$ has a perfect matching. Let $|V(G_i)|=2n_i$ for all $i$.
		We fix an integer $1\leq i\leq r$. If $n_i=1$, then $G_i$ is a tree, and we are done. So we may assume that $n_i\geq 2$.
		Set $J=\prod_{j=1,j\neq i}^r I(G_j)^{[n_j]}$. Then $J=(u)$ for some monomial $u$ of degree $2n-2n_i$ and we have the inclusion $uI(G_i)^{[n_i-1]}\subseteq I(G)^{[n-1]}$. Since $I(G)^{[n-1]}$ is Cohen-Macaulay with $\height\,I(G)^{[n-1]}=2$, we conclude that $I(G)^{[n-1]}=\bigcap_{\ell=1}^t P_\ell$, where each $P_\ell$ is a monomial prime ideal of height $2$. If $u\in P_\ell$ for all $\ell$, then $u\in I(G)^{[n-1]}$. This is not possible, since $I(G)^{[n-1]}$ is generated in degree $2n-2$ and $\deg u=2n-2n_i<2n-2$. So $u\notin P_\ell$ for some $\ell$. Since $uI(G_i)^{[n_i-1]}\subseteq P_\ell$, we obtain $I(G_i)^{[n_i-1]}\subseteq P_\ell$. This shows that   
		$\height\,I(G_i)^{[n_i-1]}\leq 2$,
		which implies that 
		$\dim S_i/I(G_i)^{[n_i-1]}\geq 2n_i-2$, where $S_i=K[V(G_i)]$. Since $n_i\geq 2$, by Lemma~\ref{dimineq} it follows that $\dim S_i/I(G_i)^{[n_i-1]}=2n_i-2$. Moreover, by Corollary~\ref{ConnCom} we conclude that $I(G_i)^{[k]}$
		is Cohen-Macaulay for all $1\leq k\leq n_i$. Therefore, by the first part of the proof on the connected case, it follows that $G_i$ is a Cohen-Macaulay tree. Hence, $G$ is a Cohen-Macaulay forest. 
	\end{proof}

	In the proof of Theorem~\ref{Thm:I(G)[k]CM-perfect} it is shown that for a connected graph $G$  with a perfect matching such that $S/I(G)^{[n-1]}$ is Cohen-Macaulay of dimension $2n-2$, the Hilbert-Burch matrix $X$ has only variables as non-zero entries. This gives the following
	
	\begin{Corollary}
		Let $G$ be a connected graph with a perfect matching of size $n$ such that $S/I(G)^{[n-1]}$ is Cohen-Macaulay of dimension $2n-2$. Then $I(G)^{[n-1]}$ has a linear resolution.
	\end{Corollary}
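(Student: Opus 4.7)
The plan is to apply the Hilbert--Burch theorem to $I := I(G)^{[n-1]}$ and combine it with the structural information on the syzygy matrix that was already extracted inside the proof of Theorem~\ref{Thm:I(G)[k]CM-perfect}. Because $G$ has $2n$ non-isolated vertices (coming from the perfect matching), the hypothesis $\dim S/I = 2n-2$ gives $\height I = 2$, and the Cohen--Macaulayness of $S/I$ then makes $I$ a perfect ideal of grade $2$. Hilbert--Burch furnishes a minimal graded free resolution
\[
0 \longrightarrow S^{m-1} \xrightarrow{\,X\,} S^m \longrightarrow I \longrightarrow 0,
\]
with $m = |\mathcal{G}(I)|$ and whose $(m-1)$-minors generate $I$.

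The core input is what was proved inside Theorem~\ref{Thm:I(G)[k]CM-perfect}: using the injection $\Phi : E(G) \to \mathcal{G}(I)$ together with a degree count comparing the degrees of the $(m-1)$-minors of $X$ against the common degree $d = 2n-2$ of the generators, the argument forces $m = 2n-1$ and shows that every non-zero entry of $X$ is a variable, hence a linear form. I would verify that both of these conclusions rest only on the Cohen--Macaulayness of $I(G)^{[n-1]}$ together with the perfect matching and the dimension hypothesis, so they are available at the level of generality demanded by the corollary.

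From here the conclusion is essentially automatic: since each column of $X$ has entries of degree $1$, the syzygy it represents is homogeneous of degree $d+1 = 2n-1$, and the graded shape of the resolution is forced to be
\[
0 \longrightarrow S(-(2n-1))^{2n-2} \xrightarrow{\,X\,} S(-(2n-2))^{2n-1} \longrightarrow I \longrightarrow 0,
\]
which is a linear resolution by definition. The one step I expect to need careful attention is expository: confirming that the ``$X$ has variable entries'' conclusion from the earlier proof is indeed available here, since in Theorem~\ref{Thm:I(G)[k]CM-perfect} it was obtained while verifying the implication (b)~$\Rightarrow$~(c); a re-reading of that portion of the argument shows, however, that it only invokes the Cohen--Macaulayness of $I(G)^{[n-1]}$ and the perfect matching/dimension data that are already assumed in the corollary, so the conclusion carries over verbatim.
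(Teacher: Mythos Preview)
Your approach is exactly the paper's: both simply point back to the Hilbert--Burch analysis in the proof of Theorem~\ref{Thm:I(G)[k]CM-perfect} and observe that once the syzygy matrix $X$ has only linear entries, the resolution is linear.

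However, your closing verification contains an error. You claim that the ``$X$ has variable entries'' conclusion ``only invokes the Cohen--Macaulayness of $I(G)^{[n-1]}$ and the perfect matching/dimension data.'' That is not so. In the proof of Theorem~\ref{Thm:I(G)[k]CM-perfect}, the crucial inequality $m \ge 2n-1$ is obtained via $m \ge |E(G)| \ge 2n-1$, and the second inequality comes from Lemma~\ref{Lem:elementary}, which requires $G$ to be \emph{connected}. The degree count alone gives only $m-1 \le 2n-2$, i.e.\ the reverse inequality. Without connectedness the statement is actually false: if $G$ consists of two disjoint edges ($n=2$), then $I(G)^{[n-1]} = I(G) = (x_1y_1,\,x_2y_2)$ is a Cohen--Macaulay complete intersection of dimension $2 = 2n-2$, but its minimal free resolution $0 \to S(-4) \to S(-2)^2 \to I(G) \to 0$ is not linear. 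Thus the corollary tacitly relies on the connected case treated in Theorem~\ref{Thm:I(G)[k]CM-perfect}; your argument (like the paper's one-line justification) is correct once you add the connectedness hypothesis, but your explicit assertion that connectedness is not needed is wrong.
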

	
	The dimension of $S/I(G)^{[\nu(G)-1]}$ which is referred in the statement of Theorem~\ref{Thm:I(G)[k]CM-perfect} could get only two values as the following lemma shows.

	\begin{Lemma}\label{dimineq}
		Let $G$ be a graph with a perfect matching on $2n$ vertices. Then $$2k-1\leq \dim S/I(G)^{[k]}\leq n+k-1,$$
		for all $1\le k\le\nu(G)=n$.
		In particular, if $n\geq 2$, then
		$$\dim S/I(G)^{[n-1]}\in\{2n-3,2n-2\}.$$ 
	\end{Lemma}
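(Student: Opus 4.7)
The plan is to translate bounds on $\dim S/I(G)^{[k]}$, equivalently on $\height I(G)^{[k]}$, into counting constraints on vertex covers of the $k$-matchings of $G$. Since $I(G)^{[k]}$ is a squarefree monomial ideal, each of its minimal primes is generated by a subset $W\subseteq V(G)$, and the monomial prime $(W)$ contains $I(G)^{[k]}$ precisely when $W$ meets $V(M)$ for every $k$-matching $M$ of $G$, equivalently when $\nu(G\setminus W)\le k-1$. Thus
\[
\height I(G)^{[k]}\ =\ \min\bigl\{|W|\,:\,W\subseteq V(G),\ \nu(G\setminus W)\le k-1\bigr\},
\]
and $\dim S/I(G)^{[k]}=2n-\height I(G)^{[k]}$. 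Both bounds are then obtained by exhibiting, respectively, a small cover and a lower estimate on the size of any cover.

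For the lower bound $\dim S/I(G)^{[k]}\ge 2k-1$, I would take any $W\subseteq V(G)$ of size $2n-2k+1$. Then $G\setminus W$ has only $2k-1$ vertices and therefore admits no matching of size $k$, so $\nu(G\setminus W)\le k-1$ and $W$ is a cover. This gives $\height I(G)^{[k]}\le 2n-2k+1$, hence $\dim S/I(G)^{[k]}\ge 2k-1$.

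For the upper bound $\dim S/I(G)^{[k]}\le n+k-1$, I would use the existence of a perfect matching $e_1,\dots,e_n$ of $G$. Every $k$-subset of $\{e_1,\dots,e_n\}$ is a $k$-matching, so if $W$ is any cover then at most $k-1$ of the edges $e_i$ are disjoint from $W$, i.e., $W$ meets $V(e_i)$ for at least $n-k+1$ indices $i$. Since the $e_i$ are pairwise disjoint, each vertex of $W$ lies in at most one $e_i$, and therefore $|W|\ge n-k+1$. This yields $\height I(G)^{[k]}\ge n-k+1$, and so $\dim S/I(G)^{[k]}\le 2n-(n-k+1)=n+k-1$.

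Finally, setting $k=n-1$ for $n\ge 2$ the two bounds specialize to $2n-3\le\dim S/I(G)^{[n-1]}\le 2n-2$, and since the dimension is an integer the ``in particular'' statement follows. The argument is entirely elementary once the bijection between minimal primes of $I(G)^{[k]}$ and vertex sets $W$ with $\nu(G\setminus W)\le k-1$ is recorded; I do not foresee a genuine obstacle.
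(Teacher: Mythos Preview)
Your argument is correct. The treatment of the upper bound $\dim S/I(G)^{[k]}\le n+k-1$ is essentially the paper's: the paper introduces the subideal $J=(\,\prod_{i\in F}x_iy_i : F\subseteq[n],\ |F|=k\,)\subseteq I(G)^{[k]}$ and shows $\height J\ge n-k+1$ by the very same pigeonhole consideration you use (any monomial prime $(W)\supseteq J$ must meet at least $n-k+1$ of the disjoint edges $e_i$). Your phrasing via covers $W$ with $\nu(G\setminus W)\le k-1$ is just a reformulation of that.

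The lower bound, however, is handled differently. The paper does \emph{not} argue on the dimension directly; it invokes the depth inequality $\depth S/I(G)^{[k]}\ge 2k-1$ from \cite[Proposition~1.1]{EHHM2022b} and then uses $\dim\ge\depth$. Your proof is more elementary and entirely self-contained: exhibiting any $W\subseteq V(G)$ with $|W|=2n-2k+1$ immediately gives a monomial prime of that height containing $I(G)^{[k]}$, since $G\setminus W$ has only $2k-1$ vertices and hence cannot carry a $k$-matching. This avoids the external reference altogether and shows that the inequality $\dim S/I(G)^{[k]}\ge 2k-1$ holds for \emph{any} graph on $2n$ vertices, with no hypothesis on matchings needed. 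The paper's route, on the other hand, yields the stronger information that already the depth satisfies the same lower bound.
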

	
	\begin{proof}
		Let $M=\{e_1,\ldots,e_n\}$ be a perfect matching of $G$ with $e_i=\{x_i,y_i\}$ for all $i$. For each non-empty 
		subset $F\subseteq [n]$, we set $u_F=\prod_{i\in F}(x_iy_i)$. Let $1\le k\le n$ be an integer and let $J=(u_F: F\subseteq [n],\, |F|=k)$.
		Since $J\subseteq I(G)^{[k]}$, to prove the right-hand side inequality, it is enough to show that $\height\, J\geq n-k+1$. If this was not the case, there would exists a monomial prime ideal $P$ such that $J\subseteq P$ and 
		$\height\, P\leq n-k$. Set $A=\{i\in [n]: x_i\in P \textup{ or } y_i\in P\}$. Without loss of generality assume that $A\subseteq [n-k]$. Then $u_F\notin P$ for $F=[n]\setminus [n-k]$, which is a contradiction. Finally, by \cite[Proposition 1.1]{EHHM2022b} we have $\depth S/I(G)^{[k]}\geq 2k-1$.
	\end{proof}
	\begin{Remark}
		\rm The inequality $2n-3\le\dim S/I(G)^{[n-1]}$ in the previous lemma, can also be obtained as follows. Recall that the \textit{big cosize} of a monomial ideal $J\subset S$ with minimal generating set $\mathcal{G}(J)=\{u_1,\dots,u_m\}$, is denoted by $\bicosize\,J$, and it is defined as the smallest integer $s$ such that
		$$
		\lcm(u_{j_1},u_{j_2},\dots,u_{j_s})\ =\ \lcm(u_1,u_2,\dots,u_m),
		$$
		for all integers $1\le j_1<j_2<\dots<j_s\le m$. It is shown in \cite[Theorem 3.3(a)]{ACF3} that $\pd\,S/J\le\bicosize\,J$. Notice that $I(G)^{[n-1]}$ is a monomial ideal equigenerated in degree $2n-2$ in the polynomial ring $S$ having $2n$ variables. From this observation it follows immediately that $\bicosize\,I(G)^{[n-1]}\le3$. Hence $\pd\,S/I(G)^{[n-1]}\le3$. Then, the Auslander-Buchsbaum formula implies that $2n-3\le\depth S/I(G)^{[n-1]}\le\dim S/I(G)^{[n-1]}$, as desired.
	\end{Remark}
	
	An interesting family of graphs $G$ which satisfy the assumption that $G$ has a perfect matching of size $n$ and $\dim S/I(G)^{[n-1]}=2n-2$ are Cohen-Macaulay very well-covered graphs. Before showing this fact, we recall some definitions. 
	
	A graph $G$ is called \emph{unmixed} or \emph{well--covered} if all the minimal vertex covers of $G$ have the same size. Equivalently, $I(G)$ is an unmixed ideal, see \cite{HHBook}. In \cite[Corollary 3.4]{GV}, Gitler and Valencia proved that for an unmixed graph $G$ we have
	$$
	\height\,I(G)\ge|V(G)|/2.
	$$
	
	We say that $G$ is \textit{very well-covered} if $G$ is unmixed and $\height\,I(G)=|V(G)|/2$.
	
	By a result of Favaron \cite[Theorem 1.2]{OF}, very well-covered graphs have always perfect matchings. Thus, by our Theorem \ref{Thm:I(G)[nu(G)]CM}, $I(G)^{[\nu(G)]}$ is Cohen-Macaulay, and we have $|V(G)|=2n$ for some $n\ge1$, for any very well-covered graph $G$.
	
	To classify those very well-covered graphs $G$  such that $I(G)^{[k]}$ is Cohen-Macaulay for all $1\le k\le\nu(G)$, we can use Theorem~\ref{Thm:I(G)[k]CM-perfect}, once we show that $\dim S/I(G)^{[n-1]}=2n-2$, where $n$ is the matching number of $G$. This equality is proved in Proposition~\ref{Prop:dim-I(G)[k]-VWC}. For the proof of this result, we will need the following fundamental classification due to Crupi, Rinaldo and Terai \cite{CRT2011}. Here, we use a formulation of this result given in \cite[Lemma 3.1]{MMCRTY2011} (see also \cite[Characterization 1.12]{CF1} and \cite{CF2}). Hereafter, we identify each vertex of the graph with the corresponding variable.
	
	\begin{Theorem}\label{Thm:veryWellCGCM}
		Let $G$ be a very well-covered graph. Then $|V(G)|=2n$ for some $n\ge1$ and the following conditions are equivalent.
		\begin{enumerate}
			\item[\textup{(a)}] $I(G)\subset S=K[x_1,\dots,x_n,y_1,\dots,y_n]$ is Cohen-Macaulay.
			\item[\textup{(b)}] There exists a relabeling of $V(G)=\{x_1,\dots,x_n,y_1,\dots,y_n\}$ such that:\smallskip
			\item[\textup{(i)}] $X=\{x_1,\dots,x_n\}$ is a minimal vertex cover of $G$, and\\ $Y=\{y_1,\dots,y_n\}$ is a maximal independent set of $G$,
			\item[\textup{(ii)}] $x_iy_i\in I(G)$ for all $1\le i\le n$,
			\item[\textup{(iii)}] if $x_iy_j\in I(G)$ then $i\le j$,
			\item[\textup{(iv)}] if $x_iy_j\in I(G)$ then $x_ix_j\notin I(G)$,
			\item[\textup{(v)}] if $z_ix_j,y_jx_k\in I(G)$ then $z_ix_k\in I(G)$ for any distinct $i,j,k$ and $z_i\in\{x_i,y_i\}$.
		\end{enumerate}
	\end{Theorem}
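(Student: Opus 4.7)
The theorem gives a combinatorial classification of Cohen-Macaulay very well-covered graphs in terms of a carefully labeled perfect matching. By Favaron's theorem (cited above), any very well-covered graph $G$ has a perfect matching, so $|V(G)|=2n$ and the plan is to begin by fixing a perfect matching $M=\{e_1,\ldots,e_n\}$ with $e_i=\{a_i,b_i\}$; both implications hinge on orienting and indexing these edges appropriately to produce the labeling $\{x_i,y_i\}$.

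For the direction (b) $\Rightarrow$ (a), I would show that the independence complex $\Delta(G)$ is pure and vertex decomposable, which implies shellability and hence Cohen-Macaulayness. The natural candidate shedding vertex is $y_n$: by (iii) the only $x_i$ that can be adjacent to $y_n$ satisfy $i\leq n$, so $G\setminus N_G[y_n]$ is a graph on at most $2(n-1)$ vertices, and condition (v) is exactly what is needed to guarantee that this subgraph inherits the labeling properties (i)--(v). Together with the deletion $G\setminus\{y_n\}$ (where $x_n$ turns out to dominate its neighborhood), induction on $n$ with trivial base case $n=1$ closes the argument.

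For the direction (a) $\Rightarrow$ (b), the plan is first to produce the labeling and then to derive each property in turn. Since $\height I(G)=n$, every minimal vertex cover of $G$ meets each $e_i$ in exactly one vertex; fixing one such minimal vertex cover $X$ and relabeling, write $X=\{x_1,\ldots,x_n\}$ with $x_i\in e_i$ and $y_i$ the other endpoint, yielding (i) and (ii). Property (iv) then follows from unmixedness: if $x_iy_j,x_ix_j\in I(G)$ both held, one could produce a vertex cover of strictly smaller size by swapping $x_j$ with $y_j$ and removing $x_i$, contradicting well-coveredness. Property (iii) is obtained by a further topological sort of the relation $y_j\prec y_i\Leftrightarrow x_iy_j\in I(G)$, whose acyclicity I would derive by combining unmixedness with the Cohen-Macaulay hypothesis.

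The main obstacle is establishing the transitivity property (v), which is the essential algebraic-combinatorial content of the theorem. I would proceed by contradiction: given a triple $(i,j,k)$ violating (v), construct an explicit independent set $F$ of $G$ (for instance $F=\{z_i,x_k\}\cup\{y_\ell:\ell\notin\{i,j,k\}\}$, adjusted according to whether $z_i=x_i$ or $z_i=y_i$) and show that the link of $F$ in $\Delta(G)$ fails Reisner's criterion by exhibiting two maximal faces of different dimensions, contradicting very well-coveredness of the localization. The careful case analysis in $z_i=x_i$ versus $z_i=y_i$, coupled with checking that $F$ is genuinely independent and that the obstruction in its link cannot be repaired by adding further vertices, is the technical heart of the argument.
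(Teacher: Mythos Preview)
The paper does not prove this theorem. It is stated without proof and attributed to Crupi, Rinaldo and Terai \cite{CRT2011}, in the formulation of \cite[Lemma 3.1]{MMCRTY2011}; the authors use it only as a tool to establish Proposition~\ref{Prop:dim-I(G)[k]-VWC}. So there is nothing in the paper to compare your proposal against.

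That said, a few remarks on your sketch itself. Your overall architecture---vertex decomposability for (b)~$\Rightarrow$~(a), and a labeling-plus-unmixedness-plus-Reisner argument for (a)~$\Rightarrow$~(b)---is close in spirit to how the original papers proceed, but several steps as written would not go through. For (iv), your swap argument is not correct: replacing $x_j$ by $y_j$ and deleting $x_i$ need not leave a vertex cover, since edges incident to $x_i$ other than $\{x_i,y_j\}$ and $\{x_i,x_j\}$ may become uncovered. The actual obstruction is that if both $x_iy_j$ and $x_ix_j$ are edges, then \emph{every} minimal vertex cover must contain $x_i$ (any cover omitting $x_i$ would have to contain both $x_j$ and $y_j$, forcing size $>n$), and one then argues via the structure of the independence complex rather than by a direct size count. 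For (iii), acyclicity of your relation is not a consequence of unmixedness alone; it genuinely uses the Cohen--Macaulay hypothesis (or, in the original treatment, the existence of a suitable linear order coming from shellability). Finally, for (v) your proposed face $F$ will not in general be independent: for instance if $z_i=x_i$, there is no reason $x_i$ avoids edges to the $y_\ell$ with $\ell\notin\{i,j,k\}$. The published proofs handle (iv) and (v) by more delicate localizations and by exploiting the equivalence with vertex decomposability rather than Reisner's criterion directly.
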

	
	Here is a typical example of a Cohen-Macaulay very well-covered graph. This graph has $8$ vertices, and it satisfies the conditions (i)-(v) of the previous result.\vspace*{-0.6cm}
	\begin{center}
		\begin{picture}(90,150)(-0,-60)
			\put(-10,60){\circle*{4}}
			\put(40,60){\circle*{4}}
			\put(-20,65){\textit{$y_1$}}
			\put(30,65){\textit{$y_2$}}
			\put(-10,10){\circle*{4}}
			\put(40,10){\circle*{4}}
			\put(-20,0){\textit{$x_1$}}
			\put(33,0){\textit{$x_2$}}
			\put(90,10){\circle*{4}}
			\put(92,5){\textit{$x_3$}}
			\put(90,60){\circle*{4}}
			\put(90,65){\textit{$y_3$}}
			\put(-10,60){\line(0,-1){50}}
			\put(40,60){\line(0,-1){50}}
			\put(90,60){\line(0,-1){50}}
			\put(90,10){\line(1,1){50}}
			\put(140,0){\textit{$x_4$}}
			\put(140,10){\circle*{4}}
			\put(140,65){\textit{$y_4$}}
			\put(140,60){\circle*{4}}
			\put(140,60){\line(0,-1){50}}
			\qbezier(-10,10)(25,0)(40,10)
			\qbezier(-10,10)(45,-20)(90,10)
			\qbezier(-10,10)(45,-40)(140,10)
			\qbezier(40,10)(65,0)(90,10)
			\qbezier(40,10)(55,-12)(140,10)
			\put(60,-30){\textit{$G$}}
		\end{picture}
	\end{center}\vspace*{-0.9cm}

	\begin{Proposition}\label{Prop:dim-I(G)[k]-VWC}
		Let $G$ be a Cohen-Macaulay very well-covered graph. Then,
		$$\dim S/I(G)^{[k]}=|V(G)|/2+k-1\,\,\,\textit{for all}\,\,\,\,1\le k\le\nu(G).$$
	\end{Proposition}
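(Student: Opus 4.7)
The plan is to combine the upper bound $\dim S/I(G)^{[k]}\le n+k-1$ already furnished by Lemma~\ref{dimineq} (with $2n=|V(G)|$) with a matching lower bound. That lower bound is equivalent to exhibiting a monomial prime ideal of height at most $n-k+1$ containing $I(G)^{[k]}$.

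First, I would apply Theorem~\ref{Thm:veryWellCGCM} to relabel the vertices so that $X=\{x_1,\dots,x_n\}$ is a minimal vertex cover and $Y=\{y_1,\dots,y_n\}$ is a maximal independent set of $G$. Of the five conditions (i)--(v) supplied by that theorem, only the vertex-cover property (i) will actually be used; conditions (ii)--(v) play no role in this bound.

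Next, I would set $P=(x_1,x_2,\dots,x_{n-k+1})$, a monomial prime of height exactly $n-k+1$, and verify that $I(G)^{[k]}\subseteq P$. Every minimal generator of $I(G)^{[k]}$ has the form $u_1\cdots u_k$ where $u_i$ corresponds to an edge $e_i$ and $\{e_1,\dots,e_k\}$ is a $k$-matching of $G$. Since $X$ is a vertex cover, each $e_i$ contains some vertex $x_{j_i}\in X$; since the edges $e_i$ are pairwise vertex-disjoint, the indices $j_1,\dots,j_k$ are distinct elements of $\{1,\dots,n\}$. Because $|\{n-k+2,\dots,n\}|=k-1<k$, at least one index $j_i$ must satisfy $j_i\le n-k+1$, so $x_{j_i}\in P$ divides the chosen generator. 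Hence $I(G)^{[k]}\subseteq P$, which gives $\height\,I(G)^{[k]}\le n-k+1$ and therefore $\dim S/I(G)^{[k]}\ge 2n-(n-k+1)=n+k-1$, completing the proof together with Lemma~\ref{dimineq}.

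There is no real obstacle in this argument: the only creative step is spotting the right prime, after which pigeonhole immediately takes over. It is worth noting that the Cohen-Macaulay hypothesis enters only through Theorem~\ref{Thm:veryWellCGCM}, and solely to secure a convenient labeling of a minimum vertex cover of size $n$; in fact, the very well-covered hypothesis alone already suffices for the bound produced here.
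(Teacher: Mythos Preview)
Your proof is correct and follows the same overall strategy as the paper: invoke Lemma~\ref{dimineq} for the upper bound and show the inclusion $I(G)^{[k]}\subseteq P=(x_1,\dots,x_{n-k+1})$ for the lower bound. The one difference worth noting is in how the inclusion is verified. The paper argues by contradiction and uses condition~(iii) of Theorem~\ref{Thm:veryWellCGCM} (if $x_iy_j\in I(G)$ then $i\le j$) to rule out the variables $y_1,\dots,y_{n-k+1}$ from a hypothetical generator $u\notin P$, then counts degrees. Your pigeonhole argument is cleaner: it uses only that $X$ is a vertex cover (condition~(i)) to place one $x$-variable in each edge of a $k$-matching, and distinctness plus cardinality forces one of these into $P$. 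As you observe, this means the Cohen--Macaulay hypothesis is not actually needed for the dimension formula---the very well-covered assumption alone (which guarantees both a perfect matching, via Favaron's theorem, and a vertex cover of size $n$) suffices.
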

	\begin{proof}
		We have $|V(G)|=2n$ and $\nu(G)=n$, for some $n\ge1$. By Lemma~\ref{dimineq}, the result follows, once we show that  $\dim S/I(G)^{[k]}\ge n+k-1$ for all $k$. Equivalently, we must prove that $\height\,I(G)^{[k]}\le n-k+1$ for all $k$. Consider the prime ideal $P=(x_1,x_2,\dots,x_{n-k+1})$ for all $1\le k\le n$. 
		It is enough to show that $I(G)^{[k]}\subseteq P$.
		Suppose the contrary. Then we could find a monomial generator $u\in\mathcal{G}(I(G)^{[k]})$ which is not divided by any of the variables $x_1,x_2,\dots,x_{n-k+1}$. Such an element $u$, if it exists, would be a squarefree monomial of degree $2k$ in the set of variables
		$$
		\{y_i:\ 1\le i\le n-k+1\}\cup\{x_j,y_j:\ n-k+2\le j\le n\}.
		$$
		Note that none of the variables $y_i$, with $1\le i\le n-k+1$, can divide $u$, because by Theorem \ref{Thm:veryWellCGCM}(iii) any edge containing $y_i$ must contain also a vertex from the set
		$$
		\{x_i:\ 1\le i\le n-k+1\},
		$$
		and then the corresponding variable $x_i\in P$ would divide $u$, which is forbidden. Hence, $u$ would be a squarefree monomial in the set of variables
		$$
		\{x_j,y_j:\ n-k+2\le j\le n\}.
		$$
		This would imply that $\deg(u)\le2(k-1)$, which is a contradiction since $\deg(u)=2k$. Hence $I(G)^{[k]}\subseteq P$ and $\height\,I(G)^{[k]}\leq n-k+1$.
	\end{proof}
	
	Using Theorem~\ref{Thm:I(G)[k]CM-perfect} and Proposition~\ref{Prop:dim-I(G)[k]-VWC} we are able to classify the edge ideals of very well-covered graphs whose all matching powers are Cohen-Macaulay.
	
	\begin{Corollary}\label{Thm:I(G)[k]CM-VWC}
		Let $G$ be a very well-covered graph. Then, the following conditions are equivalent.
		\begin{enumerate}
			\item[\textup{(a)}] $I(G)^{[k]}$ is Cohen-Macaulay, for all $1\le k\le\nu(G)$.
			\item[\textup{(b)}] $G$ is a Cohen-Macaulay forest.
		\end{enumerate}
	\end{Corollary}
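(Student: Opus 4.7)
The plan is to reduce the corollary to a direct application of Theorem~\ref{Thm:I(G)[k]CM-perfect}, with the dimension hypothesis supplied by Proposition~\ref{Prop:dim-I(G)[k]-VWC}. Two of the three implications are essentially free: (a)~$\Rightarrow$~(b) is immediate (a Cohen-Macaulay matching power for every $k$ gives the two particular cases $k=1$ and $k=\nu(G)-1$), and (c)~$\Rightarrow$~(a) is exactly \cite[Corollary 3.6]{DRS24}, which is also reproduced as the implication (c)~$\Rightarrow$~(a) of Theorem~\ref{Thm:I(G)[k]CM-perfect}. So all the real content lies in (b)~$\Rightarrow$~(c).

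To carry out (b)~$\Rightarrow$~(c), I would start by invoking Favaron's theorem \cite[Theorem 1.2]{OF} to conclude that the very well-covered graph $G$ has a perfect matching, and therefore $|V(G)|=2n$ with $\nu(G)=n$ for some $n\ge 1$. If $n=1$, then $G$ consists of a single edge (after the standing removal of isolated vertices), which is already a Cohen-Macaulay forest, so (c) holds trivially and we may assume $n\ge 2$. Under hypothesis (b), the graph $G$ is Cohen-Macaulay very well-covered, so Proposition~\ref{Prop:dim-I(G)[k]-VWC} applied to $k=n-1$ yields
\[
\dim S/I(G)^{[n-1]}\ =\ n+(n-1)-1\ =\ 2n-2.
\]
This is precisely the dimension hypothesis of Theorem~\ref{Thm:I(G)[k]CM-perfect}. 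Since by assumption both $I(G)$ and $I(G)^{[n-1]}=I(G)^{[\nu(G)-1]}$ are Cohen-Macaulay, condition (b) of that theorem is satisfied, and hence its condition (c) holds, i.e., $G$ is a Cohen-Macaulay forest. This completes the argument.

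I do not foresee any significant obstacle, because the genuine work has already been done in the previous two results: the Hilbert-Burch analysis behind Theorem~\ref{Thm:I(G)[k]CM-perfect} and the combinatorial height computation using the structure theorem of Crupi, Rinaldo and Terai behind Proposition~\ref{Prop:dim-I(G)[k]-VWC}. The only point requiring a moment of care is the degenerate case $n=1$, handled separately above, and the mild observation that hypothesis (b) (together with $G$ very well-covered) places us in the scope of Proposition~\ref{Prop:dim-I(G)[k]-VWC}, which does require $I(G)$ itself to be Cohen-Macaulay.
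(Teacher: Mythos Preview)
Your proposal is correct and follows exactly the route the paper indicates: the corollary is stated without explicit proof, being introduced by ``Using Theorem~\ref{Thm:I(G)[k]CM-perfect} and Proposition~\ref{Prop:dim-I(G)[k]-VWC} we are able to classify\ldots'', and your argument spells out precisely this derivation, including the mild care needed for the degenerate case $n=1$ and the observation that hypothesis (b) guarantees $G$ is Cohen-Macaulay very well-covered so that Proposition~\ref{Prop:dim-I(G)[k]-VWC} applies.
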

	
	Bipartite graphs and  whisker graphs are examples of the very well-covered graphs.
	Recall that a graph $G$ is called {\em bipartite}, if there exists a partition $X\cup Y$ of the vertex set $V(G)$ such that any edge of $G$ is of the form $\{x,y\}$ for some $x\in X$ and some $y\in Y$. A {\em whisker graph} is a graph obtained by attaching a pendant edge to each vertex of a given graph.
	
	Corollary \ref{Thm:I(G)[k]CM-VWC} implies immediately
	
	\begin{Corollary}\label{bipartite}
		The only bipartite graphs $G$ for which $I(G)^{[k]}$ is Cohen-Macaulay for all $1\le k\le\nu(G)$ are the Cohen-Macaulay forests.
	\end{Corollary}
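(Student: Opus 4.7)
The plan is to reduce this statement to the very well-covered case established in Corollary~\ref{Thm:I(G)[k]CM-VWC}. The bridge is the well-known fact that a Cohen-Macaulay bipartite graph (without isolated vertices) is automatically very well-covered, so once $I(G)$ is known to be Cohen-Macaulay the bipartite hypothesis subsumes the very well-covered hypothesis. Thus the desired conclusion will follow directly from the previous corollary, together with a small combinatorial observation about bipartitions.

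Concretely, I would argue as follows. Suppose $G$ is bipartite and $I(G)^{[k]}$ is Cohen-Macaulay for all $1\le k\le\nu(G)$. Taking $k=1$, $I(G)$ is Cohen-Macaulay, so $G$ is unmixed. Fix a bipartition $V(G)=X\cup Y$. Since $G$ has no isolated vertices, both $X$ and $Y$ are minimal vertex covers of $G$: each is obviously a vertex cover, and for any $x\in X$ the set $X\setminus\{x\}$ fails to cover the edge $\{x,y\}$, where $y\in Y$ is any neighbour of $x$ (such a $y$ exists because $x$ is not isolated); the argument for $Y$ is symmetric. Unmixedness forces $|X|=|Y|$, hence $\height\,I(G)=|X|=|V(G)|/2$ and $G$ is very well-covered. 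Corollary~\ref{Thm:I(G)[k]CM-VWC} then yields that $G$ is a Cohen-Macaulay forest.

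Conversely, a Cohen-Macaulay forest is bipartite (trees are bipartite) and, as recalled in the proof of Theorem~\ref{Thm:I(G)[k]CM-perfect}, all its matching powers are Cohen-Macaulay by \cite[Corollary 3.6]{DRS24}. There is no real obstacle in this argument: all of the heavy lifting has already been carried out in Corollary~\ref{Thm:I(G)[k]CM-VWC}, and the only point that asks for a moment of care is the verification that an unmixed bipartite graph without isolated vertices is very well-covered, which is the elementary observation just made.
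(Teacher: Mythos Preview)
Your argument is correct and follows exactly the route the paper intends: the paper simply records that Corollary~\ref{Thm:I(G)[k]CM-VWC} ``implies immediately'' the bipartite case, and you have spelled out the one missing detail, namely that a Cohen--Macaulay (hence unmixed) bipartite graph without isolated vertices is very well-covered. No further work is needed.
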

	\begin{Corollary}\label{whisker}
		The only whisker graphs $G$ for which $I(G)^{[k]}$ is Cohen-Macaulay for all $1\le k\le\nu(G)$ are the Cohen-Macaulay forests.
	\end{Corollary}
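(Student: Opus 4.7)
The plan is to reduce directly to Corollary~\ref{Thm:I(G)[k]CM-VWC}. The only fact that actually needs checking is that every whisker graph is very well-covered; once this is established, the previous classification immediately yields the stated equivalence with being a Cohen-Macaulay forest.

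So the first (and essentially only) step is to verify that if $G$ is the whisker graph obtained from a simple graph $H$ on vertex set $\{v_1,\dots,v_n\}$ by appending a pendant vertex $w_i$ at each $v_i$, then $G$ is very well-covered. The set $\{v_1,\dots,v_n\}$ is already a vertex cover of size $n=|V(G)|/2$, which gives $\height I(G)\le n$; conversely, the $n$ pairwise disjoint whisker edges force every vertex cover to have size at least $n$. Hence $\height I(G)=|V(G)|/2$. For the unmixed property, I would take an arbitrary minimal vertex cover $C$ and argue that $|C\cap\{v_i,w_i\}|=1$ for every $i$: at least one of $v_i,w_i$ lies in $C$, since the edge $\{v_i,w_i\}$ must be covered; and if both did, then $w_i$ would be redundant, because $\{v_i,w_i\}$ is the only edge through $w_i$ and would remain covered by $v_i$, contradicting minimality. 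Consequently $|C|=n$ for every minimal vertex cover, and $G$ is very well-covered.

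With this verified, Corollary~\ref{Thm:I(G)[k]CM-VWC} applies verbatim and gives the conclusion: $I(G)^{[k]}$ is Cohen-Macaulay for all $1\le k\le\nu(G)$ if and only if $G$ is a Cohen-Macaulay forest. There is no real technical obstacle; all of the substance sits in Theorem~\ref{Thm:I(G)[k]CM-perfect} and Proposition~\ref{Prop:dim-I(G)[k]-VWC}, which were designed precisely so that, once a family is recognized to be very well-covered, the classification collapses to a one-line consequence. The entire difficulty of the present statement therefore reduces to the short combinatorial check above.
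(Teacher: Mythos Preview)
Your proposal is correct and follows exactly the approach the paper takes: the paper merely remarks that whisker graphs are very well-covered and then states that Corollary~\ref{Thm:I(G)[k]CM-VWC} ``implies immediately'' the result, leaving the verification implicit. Your argument simply makes explicit the short combinatorial check that every whisker graph is very well-covered, which is all that is needed.
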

	
	Recall that the \textit{normalized depth function} $g_I$ of squarefree monomial ideal $I\subset S$ is defined by setting $g_I(k)=\depth\,S/I^{[k]}-(\alpha(I^{[k]})-1)$, for all $1\le k\le\nu(I)$, where $\alpha(I^{[k]})$ is the initial degree of $I^{[k]}$. Using Proposition~\ref{Prop:dim-I(G)[k]-VWC} we obtain
	
	\begin{Corollary}\label{NorDepthFun}
		Let $G$ be a very well-covered graph such that $I(G)^{[k]}$ is Cohen-Macaulay, for all $1\le k\le\nu(G)$. Then,
		$$
		g_{I(G)}(k)=|V(G)|/2-k,\,\,\,\,\,\textit{for all}\,\,\,\,1\le k\le\nu(G).
		$$
	\end{Corollary}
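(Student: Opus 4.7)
The plan is to simply unpack the definition of the normalized depth function and plug in the two ingredients already available: the Cohen-Macaulay hypothesis (which converts depth into dimension) and Proposition~\ref{Prop:dim-I(G)[k]-VWC} (which evaluates that dimension explicitly).

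First I would fix $k$ with $1\le k\le\nu(G)$ and recall that by definition
$$
g_{I(G)}(k)\ =\ \depth S/I(G)^{[k]}-\bigl(\alpha(I(G)^{[k]})-1\bigr).
$$
Next I would use the standing hypothesis that $I(G)^{[k]}$ is Cohen-Macaulay to replace $\depth S/I(G)^{[k]}$ by $\dim S/I(G)^{[k]}$, and then invoke Proposition~\ref{Prop:dim-I(G)[k]-VWC} (which applies because, by Corollary~\ref{Thm:I(G)[k]CM-VWC}, the very well-covered graph $G$ in the hypothesis is in particular Cohen-Macaulay) to obtain
$$
\depth S/I(G)^{[k]}\ =\ |V(G)|/2+k-1.
$$

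For the term $\alpha(I(G)^{[k]})$, I would observe that each minimal generator of $I(G)^{[k]}$ corresponds to a $k$-matching of $G$ and is therefore a squarefree monomial of degree exactly $2k$, so $I(G)^{[k]}$ is equigenerated in degree $2k$ and $\alpha(I(G)^{[k]})=2k$. Putting these two facts together,
$$
g_{I(G)}(k)\ =\ \bigl(|V(G)|/2+k-1\bigr)-(2k-1)\ =\ |V(G)|/2-k,
$$
which is the desired formula. There is no serious obstacle here; the content is entirely contained in Proposition~\ref{Prop:dim-I(G)[k]-VWC} and in the elementary observation that the $k$th matching power is generated in degree $2k$.
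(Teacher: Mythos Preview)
Your proof is correct and follows exactly the approach the paper intends: the corollary is stated immediately after the sentence ``Using Proposition~\ref{Prop:dim-I(G)[k]-VWC} we obtain'', with no further proof given. One minor simplification: you do not need to invoke Corollary~\ref{Thm:I(G)[k]CM-VWC} to justify applying Proposition~\ref{Prop:dim-I(G)[k]-VWC}; the hypothesis already includes $k=1$, so $I(G)=I(G)^{[1]}$ is Cohen--Macaulay and $G$ is a Cohen--Macaulay very well-covered graph directly.
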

	
	\section{Chordal graphs and Cameron-Walker graphs}\label{sec3}
	
	We characterize chordal graphs and Cameron-Walker graphs whose matching powers of edge ideals are all Cohen-Macaulay. For chordal graphs we use a result by Herzog, Hibi and Zheng ~\cite{HHZ} which characterizes the Cohen-Macaulay chordal graphs. A {\em clique} of a graph $G$ is a subset $F\subseteq V(G)$ such that the induced subgraph of $G$ on $F$ is a complete graph. The {\em clique complex} of $G$
	is the simplicial complex on the vertex set $V(G)$ whose faces are the cliques of $G$, and we denote it by $\Delta(G)$. A {\em free vertex} of $\Delta(G)$ is a vertex which belongs to precisely one facet of $\Delta(G)$. 
	The main theorem in~\cite{HHZ} shows that $G$ is a Cohen-Macaulay chordal graph if and only if $V(G)$ is the disjoint union of the facets of  $\Delta(G)$ which admit a free vertex.

	We set $r_G$ to be the number of facets of $\Delta(G)$ which admit a free vertex.
	
	\begin{Theorem}\label{chordal}
		Let $G$ be a chordal graph.
		The following conditions are equivalent.
		\begin{enumerate}
			\item[\textup{(a)}] $I(G)^{[k]}$ is Cohen-Macaulay, for all $1\le k\le\nu(G)$.
			\item[\textup{(b)}] $G$ is a Cohen-Macaulay forest or a complete graph.
		\end{enumerate}
	\end{Theorem}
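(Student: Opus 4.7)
For the direction (b) $\Rightarrow$ (a), my plan is to handle the two alternatives separately: if $G$ is a Cohen--Macaulay forest, this is \cite[Corollary 3.6]{DRS24}, while if $G = K_n$, then every matching power $I(K_n)^{[k]}$ coincides with the squarefree Veronese ideal $\m^{[2k]}$, and squarefree Veronese ideals are Cohen--Macaulay by \cite[Theorem 4.2]{HH2006}.

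For the direction (a) $\Rightarrow$ (b), I would begin from the fact that $I(G)$ is Cohen--Macaulay and $G$ is chordal, so the theorem of Herzog, Hibi and Zheng \cite{HHZ} supplies a disjoint partition $V(G) = F_1 \sqcup \cdots \sqcup F_{r_G}$ into facets of $\Delta(G)$ admitting free vertices. For each $i$ I fix a free vertex $v_i \in F_i$, so that $N_G(v_i) = F_i \setminus \{v_i\}$. If $r_G = 1$, then $V(G) = F_1$ is itself a clique and $G = K_{|V(G)|}$, which already yields (b). The substantive case is $r_G \ge 2$, where the plan is to force $G$ to be a Cohen--Macaulay forest by applying Theorem~\ref{Thm:I(G)[k]CM-perfect}.

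Verifying the hypotheses of Theorem~\ref{Thm:I(G)[k]CM-perfect} proceeds in two steps. First I show that $G$ has a perfect matching: by Theorem~\ref{Thm:I(G)[nu(G)]CM} and the hypothesis that $I(G)^{[\nu(G)]}$ is Cohen--Macaulay, the only alternative would be that $|V(G)|$ is odd and $G \setminus \{v\}$ has a perfect matching for every $v \in V(G)$. However, if some $|F_i| = 2$, say $F_i = \{v_i, w_i\}$, then $v_i$ is a leaf with unique neighbour $w_i$, and $G \setminus \{w_i\}$ isolates $v_i$, contradicting the perfect-matching condition; so every $|F_i| \ge 3$ in that alternative. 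I would then rule this remaining configuration out by strong induction on $|V(G)|$, invoking Theorem~\ref{neighbourhood} at the simplicial free vertex $v_1 \in F_1$ to pass to the smaller chordal graph $G \setminus F_1$, whose matching powers are still all Cohen--Macaulay, and using Corollary~\ref{ConnCom} to analyse its connected components. Second, I show that $\dim S/I(G)^{[\nu(G)-1]} = 2\nu(G)-2$: by Lemma~\ref{dimineq} this amounts to excluding the value $2\nu(G) - 3$, equivalently to producing a pair $\{x, y\} \subset V(G)$ such that $G \setminus \{x, y\}$ has no perfect matching. The partition structure provides such a pair naturally: if some $|F_i|=2$, taking $x = w_i$ together with any second vertex isolates the leaf $v_i$; if some $|F_i|=3$ instead, the pair $\{x,y\} = F_i \setminus \{v_i\}$ again isolates the free vertex $v_i$; and in the remaining scenario (every $|F_i| \ge 4$), a pair $\{x, y\}$ drawn from two different facets forces an odd connected component and hence destroys every perfect matching of $G \setminus \{x,y\}$.

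Once both hypotheses are in place, and since $I(G)$ and $I(G)^{[\nu(G)-1]}$ are both Cohen--Macaulay by assumption, Theorem~\ref{Thm:I(G)[k]CM-perfect} forces $G$ to be a Cohen--Macaulay forest, giving conclusion (b). I expect the hardest step to be the inductive argument within the first hypothesis: in the subcase where $|V(G)|$ is odd and every $|F_i|\ge 3$ no facet supplies a leaf that can be directly isolated, so the contradiction has to be teased out by combining Theorem~\ref{neighbourhood} with Corollary~\ref{ConnCom} and the Herzog--Hibi--Zheng partition to iteratively reduce to smaller chordal graphs covered by the inductive hypothesis.
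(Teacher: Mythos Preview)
Your treatment of (b) $\Rightarrow$ (a) matches the paper. For (a) $\Rightarrow$ (b) your route is genuinely different: you try to feed $G$ into Theorem~\ref{Thm:I(G)[k]CM-perfect} by verifying (1) that $G$ has a perfect matching and (2) the dimension condition $\dim S/I(G)^{[\nu(G)-1]}=2\nu(G)-2$. The paper never uses Theorem~\ref{Thm:I(G)[k]CM-perfect} here; it runs an induction on $r_G$, and the base case $r_G=2$ is settled by a direct argument: when $G$ is connected with some $|F_i|\ge 3$, two minimal primes of $I(G)^{[2]}$ of different heights are exhibited, so $I(G)^{[2]}$ is not unmixed; when $G$ is disconnected (hence $G=K_n\sqcup K_m$), Lemma~\ref{Lem:aux2} shows that $I(G)^{[\nu(G)-1]}$ or $I(G)^{[\nu(G)]}$ fails to be Cohen--Macaulay. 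The inductive step then deletes a facet via Theorem~\ref{neighbourhood}.

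Your plan has a real gap precisely at $r_G=2$, and in both verification steps. In step~(1), once you are in the configuration ``$|V(G)|$ odd, $r_G\ge 2$, every $|F_i|\ge 3$'' and you pass to $G\setminus F_1$ via Theorem~\ref{neighbourhood}, the inductive hypothesis only tells you that $G\setminus F_1$ is a Cohen--Macaulay forest or a complete graph; when $r_G=2$ this is simply $K_{|F_2|}$, which is consistent, and the induction yields no contradiction (Corollary~\ref{ConnCom} does not help either, since $K_{|F_2|}$ is connected). In step~(2), the assertion that in the subcase ``every $|F_i|\ge 4$'' a pair from two different facets forces an odd component of $G\setminus\{x,y\}$ is not justified: if the bipartite graph of cross-edges between $F_1$ and $F_2$ has no vertex cover of size two with one vertex on each side, then every such removal leaves at least one cross-edge, and $G\setminus\{x,y\}$ is connected on an even number of vertices. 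Your argument here is purely combinatorial and never uses hypothesis~(a), so it would have to hold for \emph{all} Cohen--Macaulay chordal graphs with a perfect matching and $r_G\ge 2$, which is more than you need and not what you establish. In short, both hypotheses of Theorem~\ref{Thm:I(G)[k]CM-perfect} remain unverified exactly when $r_G=2$, and a direct argument of the kind the paper supplies for that base case is still required.
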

	
	For the proof of this theorem, we need some auxiliary lemmas.
	\begin{Lemma}\label{Lem:aux1}
		Let $S=K[x_1,\dots,x_n]$ and let $\m=(x_1,\dots,x_n)$ be the maximal homogeneous ideal. Then $\depth S/\m^{[k]}=\dim S/\m^{[k]}=k-1$ for all $1\le k\le n$.
	\end{Lemma}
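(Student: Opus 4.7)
The plan is to identify $\m^{[k]}$ with a well-understood Stanley--Reisner ideal and read off both invariants from that combinatorial description.

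Step 1 (Identification). I would first observe that the minimal generators of $\m$ are $x_1,\ldots,x_n$, and $k$ of these form a regular sequence iff they are pairwise distinct (equivalently, have pairwise disjoint supports). Hence
\[
\m^{[k]} \ = \ \bigl(x_{i_1}\cdots x_{i_k}\, :\, 1\le i_1<\cdots<i_k\le n\bigr),
\]
the squarefree Veronese ideal of degree $k$. Writing $\m^{[k]} = I_\Delta$, the complex $\Delta$ is the $(k-2)$-skeleton of the $(n-1)$-simplex on $\{x_1,\ldots,x_n\}$: its facets are precisely the $(k-1)$-subsets of the vertex set.

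Step 2 (Dimension). Next I would determine $\height\,\m^{[k]}$ directly from the minimal primes. A monomial prime $(x_j:j\in A)$ contains $\m^{[k]}$ iff $A$ meets every $k$-subset of $[n]$, which forces $|A|\ge n-k+1$, with equality clearly attainable (for instance, by $A = \{1,\ldots,n-k+1\}$). Thus $\height\,\m^{[k]}=n-k+1$ and $\dim S/\m^{[k]} = k-1$, agreeing with $\dim\Delta+1=k-1$.

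Step 3 (Depth). Finally, I would invoke the classical fact that every skeleton of a simplex is pure shellable---one may shell the facets by, for instance, listing the $(k-1)$-subsets of $[n]$ in reverse lexicographic order---and is therefore Cohen--Macaulay. This yields $\depth S/\m^{[k]} = \dim S/\m^{[k]} = k-1$.

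No real obstacle is anticipated: the identification in Step 1 is immediate, the height computation in Step 2 is a pigeonhole argument, and the Cohen--Macaulayness of simplex skeleta in Step 3 is textbook material. If one prefers to avoid quoting shellability, an alternative route is to observe that the squarefree Veronese $\m^{[k]}$ is matroidal, hence has a linear resolution, and then apply the Eagon--Reiner theorem to conclude Cohen--Macaulayness of $S/\m^{[k]}$.
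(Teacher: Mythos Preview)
Your argument is correct. The paper's own proof is much terser: it simply identifies $\m^{[k]}$ with the squarefree Veronese ideal $I_{n,k}$, then quotes \cite[Lemma~2]{CF2024} for its projective dimension and applies the Auslander--Buchsbaum formula. Your approach is more self-contained: you realize $S/\m^{[k]}$ as the Stanley--Reisner ring of the $(k-2)$-skeleton of the simplex, compute the height by a direct pigeonhole argument, and invoke the classical shellability of skeleta to obtain Cohen--Macaulayness. This has the virtue of not relying on an external reference and of making the underlying combinatorics visible; the paper's route is faster but opaque.

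One small imprecision in your alternative route at the end: Eagon--Reiner says that $I_\Delta$ has a linear resolution if and only if $S/I_{\Delta^\vee}$ is Cohen--Macaulay, so linearity of $\m^{[k]}$ yields Cohen--Macaulayness of $S/\m^{[n-k+1]}$, not of $S/\m^{[k]}$ directly. The conclusion still follows, since the Alexander dual of a squarefree Veronese ideal is again a squarefree Veronese ideal and you may vary $k$, but the wording should reflect that extra step.
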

	\begin{proof}
		Notice that $\m^{[k]}$ is the so-called squarefree Veronese ideal in the variables $x_1,\dots,x_n$ generated in degree $k$. This ideal is also denoted by $I_{n,k}$ in \cite{CF2024}. The assertion follows from \cite[Lemma 2]{CF2024} and the Auslander-Buchsbaum formula.
	\end{proof}
	\begin{Lemma}\label{Lem:aux2}
		Let $R=K[x_1,\dots,x_n]$, $T=K[y_1,\dots,y_m]$ and $S=R\otimes_KT=K[x_1,\dots,x_n,y_1,\dots,y_m]$. Let $I_1\subset R$ be the edge ideal of the complete graph on the vertex set $\{x_1,\dots,x_n\}$, $I_2\subset T$ be the edge ideal of the complete graph on the vertex set $\{y_1,\dots,y_m\}$, and set $I(G)=I_1+I_2$. Suppose that $n+m\ge5$. Then, there exists an integer $k\in\{\nu(G)-1,\nu(G)\}$ such that $I(G)^{[k]}$ is not Cohen-Macaulay.
	\end{Lemma}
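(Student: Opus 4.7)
The plan is to first observe that for the complete graph $K_r$, the matching power $I(K_r)^{[a]}$ coincides with the squarefree Veronese ideal $\m^{[2a]}$ in the $r$ variables of $K_r$, since any $2a$ distinct vertices of a complete graph can be paired into $a$ disjoint edges. Writing $\m_R=(x_1,\dots,x_n)\subset R$ and $\m_T=(y_1,\dots,y_m)\subset T$, this yields the decomposition
$$
I(G)^{[k]}\ =\ \sum_{a+b=k}\m_R^{[2a]}\m_T^{[2b]},
$$
and $\nu(G)=\lfloor n/2\rfloor+\lfloor m/2\rfloor$. I would then split the argument according to the parity of $n$ and $m$.

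If at least one of $n,m$ is odd, I would take $k=\nu(G)$ and invoke Theorem~\ref{Thm:I(G)[nu(G)]CM}. When both $n$ and $m$ are odd, $|V(G)|$ is even but neither $K_n$ nor $K_m$ admits a perfect matching, hence $G$ has none either. When exactly one is odd, say $n$ odd and $m$ even, then $|V(G)|$ is odd, and removing any vertex from $K_m$ produces $K_n\sqcup K_{m-1}$ whose two components have odd cardinalities (the case $m=2$ simply creating an isolated vertex), so no perfect matching exists. Thus both alternatives in part~(c) of Theorem~\ref{Thm:I(G)[nu(G)]CM} fail, and $I(G)^{[\nu(G)]}$ is not Cohen-Macaulay.

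If both $n$ and $m$ are even, then $n+m\ge 6$ and $\max(n,m)\ge 4$. I would take $k=\nu(G)-1$; the only pairs $(a,b)$ with $a+b=\nu(G)-1$, $a\le n/2$, $b\le m/2$ are $(n/2-1,m/2)$ and $(n/2,m/2-1)$, yielding
$$
I(G)^{[\nu(G)-1]}\ =\ (y_1\cdots y_m)\m_R^{[n-2]}+(x_1\cdots x_n)\m_T^{[m-2]}.
$$
A direct inspection of which monomial primes $P=(x_A,y_B)$ contain this ideal shows the requirement to be $(B\ne\emptyset$ or $|A|\ge 3)$ and $(A\ne\emptyset$ or $|B|\ge 3)$. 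Hence the minimal such primes are exactly those of the form $(x_i,y_j)$ of height~$2$, together with $(x_{i_1},x_{i_2},x_{i_3})$ (present when $n\ge 3$) and $(y_{j_1},y_{j_2},y_{j_3})$ (present when $m\ge 3$) of height~$3$. Since $\max(n,m)\ge 4$, at least one family of height-$3$ minimal primes is non-empty, while height-$2$ primes are always present; therefore $I(G)^{[\nu(G)-1]}$ is not unmixed, and so not Cohen-Macaulay.

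The main obstacle is the parity case analysis for the odd case: one must carefully verify that condition~(c) of Theorem~\ref{Thm:I(G)[nu(G)]CM} fails in every configuration, including the small case $m=2$ where deletion produces an isolated vertex. The minimal-prime calculation in the even-even case is an explicit but routine verification once the generating set of $I(G)^{[\nu(G)-1]}$ has been identified.
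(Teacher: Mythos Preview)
Your proof is correct. The odd-parity case is handled exactly as in the paper, via Theorem~\ref{Thm:I(G)[nu(G)]CM}(c); your case split (both odd versus exactly one odd) is slightly more explicit than the paper's single argument, but the content is the same.

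For the even--even case you take a genuinely different and more elementary route. The paper establishes that the decomposition
\[
I(G)^{[\nu(G)-1]}=\m_R^{[n]}\m_T^{[m-2]}+\m_R^{[n-2]}\m_T^{[m]}
\]
is a Betti splitting, computes $\depth S/I(G)^{[\nu(G)-1]}=n+m-3$ via the resulting depth formula and Lemma~\ref{Lem:aux1}, and then checks that $\dim S/I(G)^{[\nu(G)-1]}=n+m-2$ by exhibiting $(x_1,y_1)$ as a minimal prime. You instead bypass the depth computation entirely: from the same generating set you read off that the monomial primes containing $I(G)^{[\nu(G)-1]}$ are exactly those $(x_A,y_B)$ with $(B\ne\emptyset$ or $|A|\ge 3)$ and $(A\ne\emptyset$ or $|B|\ge 3)$, and hence the minimal primes are the $(x_i,y_j)$ of height~$2$ together with the $(x_{i_1},x_{i_2},x_{i_3})$ and $(y_{j_1},y_{j_2},y_{j_3})$ of height~$3$; since $\max(n,m)\ge 4$ the ideal is not unmixed. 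Your argument avoids the Tor-vanishing machinery of Proposition~\ref{Prop:CriterionBettiSplit} and Lemma~\ref{Lemma:EsistenzaVarphi} altogether, at the cost of not determining the exact depth (the paper's computation shows $S/I(G)^{[\nu(G)-1]}$ is in fact almost Cohen--Macaulay, which your method does not see).
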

	\begin{proof}
		Let $\m=(x_1,\dots,x_n)$ and $\n=(y_1,\dots,y_m)$. Then $I_1=\m^{[2]}$ and $I_2=\n^{[2]}$.
		
		If at least one of $n$ and $m$ is an odd integer, say $n$, then $I(G)^{[\nu(G)]}$ is not Cohen-Macaulay. Indeed, let $x_i\in V(K_m)$. Then neither $G$ nor $G\setminus\{x_i\}$ have perfect matchings, because $K_n$ is a component of both graphs with an odd number of vertices. Then, by Theorem \ref{Thm:I(G)[nu(G)]CM}(c), $I(G)^{[\nu(G)]}$ is not Cohen-Macaulay.
		
		Suppose now that both $n$ and $m$ are even integers, say $n=2r$ and $m=2t$. Then $n+m\ge6$ and we have
		\begin{equation}\label{eq:Betti-m-n}
			I(G)^{[\nu(G)-1]}=\m^{[2r]}\n^{[2t-2]}+\m^{[2r-2]}\n^{[2t]},
		\end{equation}
		with the convention that $J^{[0]}=S$ for a monomial ideal $J\subset S$. We claim that (\ref{eq:Betti-m-n}) is a Betti splitting. Indeed,
		$$
		(\m^{[2r]}\n^{[2t-2]})\cap(\m^{[2r-2]}\n^{[2t]})=\m^{[2r]}\n^{[2t]}.
		$$
		Since $\m$ and $\n$ are monomial ideals in disjoint sets of variables, it is clear that the inclusion maps $\m^{[2r]}\n^{[2t]}\rightarrow\m^{[2r]}\n^{[2t-2]}$ and $\m^{[2r]}\n^{[2t]}\rightarrow\m^{[2r-2]}\n^{[2t]}$ are $\Tor$-vanishing, if and only if, the inclusion maps $\n^{[2t]}\rightarrow\n^{[2t-2]}$ and $\m^{[2r]}\rightarrow\m^{[2r-2]}$ are $\Tor$-vanishing, which is clearly the case. Therefore, equation (\ref{eq:BettiSplitEq}) yields
		\begin{equation}\label{eq:depth-aux}
			\depth S/I(G)^{[\nu(G)-1]}\ =\ \min\left\{\substack{\displaystyle\depth S/\m^{[2r]}\n^{[2t-2]},\,\,\depth S/\m^{[2r-2]}\n^{[2t]}\\[5pt] \displaystyle\depth S/\m^{[2r]}\n^{[2t]}-1}\right\}.
		\end{equation}
		Again, because $\m$ and $\n$ are monomial ideals in disjoint sets of variables, by using Lemma \ref{Lem:aux1}, we obtain that
		$$
		\depth S/\m^{[2r]}\n^{[2t-2]}=\begin{cases}
			\depth R/\m^{[2r]}+\depth T/\n^{[2t-2]}+1=n+m-3&\textup{if}\ t>1,\\
			\depth R/\m^{[2r]}+\depth T=n+m-1&\textup{if}\ t=1.
		\end{cases}
		$$
		Similarly,
		$$
		\depth S/\m^{[2r-2]}\n^{[2t]}=\begin{cases}
			n+m-3&\textup{if}\ r>1,\\
			n+m-1&\textup{if}\ r=1.
		\end{cases}
		$$
		and
		$$
		\depth S/\m^{[2r]}\n^{[2t]}=\depth R/\m^{[2r]}+\depth T/\n^{[2s]}+1=n+m-1.
		$$
		Since $n+m\ge 6$, we have $r+t\ge 3$. Thus, at least one of the integers $r$ or $t$ is strictly bigger than one, and so, the previous three formulas combined with (\ref{eq:depth-aux}) yield that
		$$
		\depth S/I(G)^{[\nu(G)-1]}=n+m-3.
		$$
		We claim that $\dim S/I(G)^{[\nu(G)-1]}=n+m-2$, which finishes the proof. For this aim, it is enough to show that $\height\,I(G)^{[\nu(G)-1]}=2$. We claim that $I(G)^{[\nu(G)-1]}$ is contained in $(x_1,y_1)$. This is clear, as the following computation shows
		$$
		I(G)^{[\nu(G)-1]}\ =\ \begin{cases}
			(x_1\cdots x_n)\n^{[2t-2]}+(y_1\cdots y_m)\m^{[2r-2]}&\textup{if}\ r>1\ \textup{and}\ t>1,\\
			(x_1\cdots x_n)+(y_1y_2)\m^{[2r-2]}&\textup{if}\ r>1\ \textup{and}\ t=1,\\
			(x_1x_2)\n^{[2t-2]}+(y_1\cdots y_m)&\textup{if}\ r=1\ \textup{and}\ t>1.
		\end{cases}
		$$
		Since at least one of $r$ and $t$ is strictly bigger than one, the previous computation also shows that no principal monomial prime ideal can be an associated prime of $I(G)^{[\nu(G)-1]}$. Hence $\height\,I(G)^{[\nu(G)-1]}>1$. Consequently $(x_1,y_1)\in\Ass\,I(G)^{[\nu(G)-1]}$. This shows that $\height\,I(G)^{[\nu(G)-1]}=2$, which concludes the proof.
	\end{proof}
	
	\begin{proof}[Proof of Theorem \ref{chordal}]
		(a) $\Rightarrow$ (b) 
		Let $G$ be a Cohen-Macaulay chordal graph. If $G$ is a forest, we are done. So, we may assume that $G$ has a clique of size at least $3$. We need to prove that $r_G=1$. Indeed, we show by induction on $r_G$ that if $r_G\geq 2$, then  $I(G)^{[k]}$ is not Cohen-Macaulay for some integer $k$. 
		For the first step of the induction assume that $r_G=2$. 
		First we show that if $G$ is connected with $r_G=2$, then $I(G)^{[2]}$ is not Cohen-Macaulay. 
		Let $F_1,F_2$ be the facets of $\Delta(G)$ which admit a free vertex.
		Since $I(G)$ is 
		Cohen-Macaulay, by the characterization of Cohen-Macaulay chordal graphs~\cite{HHZ} mentioned in the beginning of this section, we have $V(G)$ is the disjoint union of $F_1$ and $F_2$. Since by our assumption $G$ is not a forest, we may assume that $|F_1|\geq 3$. Let $x_1\in F_1$ and $x_2\in F_2$ be free vertices.
		Since $G$ is connected by our assumption, we may choose $x_i,x_j\in F\setminus \{x_1\}$ and $x_k\in F_2\setminus \{x_2\}$ so that $\{x_i,x_k\}\in E(G)$.
		Then the prime ideals 
		$P=(x_{\ell}: x_{\ell}\in V(G)\setminus \{x_1,x_i,x_j,x_2\})$ and $Q=(x_{\ell}: x_{\ell}\in V(G)\setminus \{x_1,x_i,x_k\})$ are minimal prime ideals of $I(G)^{[2]}$.
		This shows that $I(G)^{[2]}$ is not unmixed and hence not Cohen-Macaulay.
		
		Suppose now that $r_G=2$ but $G$ is not connected. Then $G$ is the disjoint union of two complete graphs $K_{n}$ and $K_{m}$. We have $n+m\ge5$, otherwise $G$ would be a forest. Then, Lemma \ref{Lem:aux2} implies that $I(G)^{[k]}$ is not Cohen-Macaulay for some $k\in\{\nu(G)-1,\nu(G)\}$ .
		
		Now, let $r_G>2$. Assume inductively that for any Cohen-Macaulay chordal graph $H$ which is not a forest and with $2\leq r_H<r_G$, the ideal $I(H)^{[k]}$ is not Cohen-Macaulay for some $k$. Let $F_1,\ldots,F_{r_G}$ be the facets of $\Delta(G)$ which admit a free vertex. Since by our assumption $G$ is not a forest, we may assume that $|F_1|\geq 3$. 
		Choose a free vertex  $x\in F_{r_G}$ and set $H=G\setminus N_G[x]$. Since $V(G)$ is the disjoint union of $F_1,\ldots,F_{r_G}$, we have $V(H)=V(G)\setminus F_{r_G}$ and   the facets of $\Delta(H)$ which admit a free vertex are $F_1,\ldots,F_{r_G-1}$. Therefore, $H$ is a Cohen-Macaulay chordal graph and $2\leq r_H=r_G-1<r_G$.
		Moreover, $H$  it is not a forest, since $|F_1|\geq 3$. So by our induction hypothesis  
		$I(H)^{[k]}$ is not Cohen-Macaulay for some $k$. Hence by
		Theorem~\ref{neighbourhood}, $I(G)^{[k]}$ is not Cohen-Macaulay for some $k$.
		\smallskip
		
		(b) $\Rightarrow$ (a) If $G$ is a Cohen-Macaulay forest, the claim holds by \cite[Corollary 3.6] {DRS24}. Now, let $G$ be a complete graph $K_n$. For any integer $1\le k\le\nu(G)$, the ideal $I(G)^{[k]}$ is the squarefree Veronese ideal $\m^{[2k]}$, where $\m=(x_1,\dots,x_n)$ is the graded maximal ideal of $S$. So it is Cohen-Macaulay by \cite[Theorem 4.2]{HH2006}.
	\end{proof}
	
	Next, we study the class of Cameron-Walker graphs. We recall some concepts which are used in the sequel. An {\em induced matching} of $G$ is a matching $M$ of $G$ such that for any two edges $e=\{a,b\}$ and $e'=\{c,d\}$ belonging to $M$ the induced subgraph of $G$ on $\{a,b,c,d\}$ consists of two disjoint edges $e$ and $e'$. We denote by $\im(G)$ the maximum size of an induced matching of $G$. 
	
	A graph $G$ is called a {\em Cameron-Walker graph} if $G$  is connected and $\nu(G)=\im(G)$. Cameron-Walker graphs are classified in \cite[Theorem 1]{CW} (see also~\cite{HHKO}), where it is shown that  $G$ is a Cameron-Walker graph if and only if $G$ is either a star graph, or a star triangle, or it consists of a connected bipartite graph with vertex partition $[n]\sqcup[m]$ such that there is at least one leaf edge attached to each vertex $i\in[n]$ and that there may be possibly some pendant triangles attached to each vertex $j \in [m]$. By a star triangle we mean a graph joining some triangles at one common vertex. 
	
	The following result is crucial for the characterization of Cameron-Walker graphs whose all matching powers are Cohen-Macaulay. 
	
	\begin{Proposition}\label{SunnyDay}
		Let $G$ be a Cameron-Walker graph such that $I(G)^{[\nu(G)]}$ is Cohen-Macaulay. Then $G$ is either a star triangle or a bipartite graph. 
	\end{Proposition}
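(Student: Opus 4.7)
The strategy is to use the Cameron--Walker classification from \cite{CW} (see also \cite{HHKO}), recalled just before the statement: $G$ is either a star graph, a star triangle, or a graph of a third type obtained from a connected bipartite graph with bipartition $[n]\sqcup[m]$ by attaching at least one pendant edge to each vertex of $[n]$ and possibly some pendant triangles at vertices of $[m]$. Star graphs are bipartite and star triangles are star triangles, so it suffices to handle the third family and show that under the Cohen--Macaulay hypothesis no pendant triangle can occur, which then forces $G$ to be bipartite.

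To this end, I will assume towards a contradiction that some $j\in[m]$ carries a pendant triangle $\{j,a,b\}$ and then invoke Theorem~\ref{Thm:I(G)[nu(G)]CM}(c): the Cohen--Macaulayness of $I(G)^{[\nu(G)]}$ forces either $G$ to have a perfect matching, or $G\setminus\{v\}$ to have a perfect matching for every $v\in V(G)$. Both alternatives will be ruled out. The second is immediate: any $i\in[n]$ has, by the definition of the third type, a pendant leaf $\ell$ whose only neighbour is $i$; hence $\ell$ is isolated in $G\setminus\{i\}$, and no perfect matching of $G\setminus\{i\}$ can exist.

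Ruling out the first alternative is the substantive step. If $M$ were a perfect matching of $G$, then every leaf would have to be matched to its parent in $[n]$ (its unique neighbour), so each $i\in[n]$ would support exactly one leaf and would be used up in $M$ by its unique leaf edge. Consequently no vertex of $[m]$ could be matched across the bipartite core, and each such vertex would have to be matched within one of its own pendant triangles. But in $\{j,a,b\}$, matching $j$ to $a$ (the case $b$ is symmetric) leaves $b$ with its only two neighbours $\{j,a\}$ already saturated, so $b$ cannot be matched in $M$, a contradiction. The main pitfall is the bookkeeping for the third type: one must correctly exploit that leaves have degree one and that the two non-apex vertices of a pendant triangle each have degree two, so that both alternatives of Theorem~\ref{Thm:I(G)[nu(G)]CM}(c) collapse under the presence of a pendant triangle.
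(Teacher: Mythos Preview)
Your proof is correct and shares the paper's overall plan: invoke Theorem~\ref{Thm:I(G)[nu(G)]CM}(c) and rule out both alternatives in the presence of a pendant triangle. The treatment of the second alternative (a leaf becomes isolated in $G\setminus\{i\}$) is identical. Where you differ is in ruling out a perfect matching. The paper exploits the defining Cameron--Walker identity $\nu(G)=\im(G)$: a perfect matching has size $\nu(G)$, so there exists a perfect \emph{induced} matching, and an induced matching clearly cannot saturate all three vertices of a triangle. You instead argue structurally---leaves force every $i\in[n]$ to be saturated by its leaf edge, so each $j\in[m]$ must be matched inside one of its pendant triangles, stranding the third vertex of that triangle---without ever invoking $\im(G)$. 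Your route is a bit longer but more elementary and would work for any graph of this shape, Cameron--Walker or not; the paper's is essentially a one-line application of the Cameron--Walker hypothesis. One minor labeling point: when you write ``matching $j$ to $a$'', strictly $j$ might be matched into a different pendant triangle than the one $\{j,a,b\}$ you fixed at the outset, but the same obstruction applies verbatim to whichever triangle receives $j$, so this is harmless.
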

	
	\begin{proof}
		Suppose that $G$ is neither a star triangle nor a bipartite graph. Then $G$ is a graph consisting of a connected bipartite graph with vertex partition $[n]\sqcup[m]$ such that there is at least one leaf attached to each vertex in $[n]$ and that there exists at least one pendant triangle $T$ attached to some vertex in $[m]$. Suppose that $G$ has a perfect matching. Since $\nu(G)=\im(G)$, we conclude that $G$ has a perfect matching which is an induced matching $M$. Thus any vertex of the triangle $T$ is an endpoint of an edge in the  matching $M$, which is not possible since $M$ is an induced matching of $G$. So, $G$ does not have a perfect matching. Consider some vertex $i\in[n]$. Then the pendant vertices of $G$ which are adjacent to $i$ are isolated vertices in the graph $G\setminus\{i\}$. Since there exists at least one of them, we conclude that $G\setminus\{i\}$ does not have a perfect matching. So, it follows from Theorem~\ref{Thm:I(G)[nu(G)]CM} that $I(G)^{[\nu(G)]}$ is not Cohen-Macaulay. 
	\end{proof}
	
	\begin{Theorem}\label{Cameron-Walker}
		Let $G$ be a Cameron-Walker graph. The following conditions are equivalent.
		\begin{enumerate}
			\item[\textup{(a)}] $I(G)^{[k]}$ is Cohen-Macaulay, for all $1\le k\le\nu(G)$.
			\item[\textup{(b)}] $G$ is either $K_2$ or $K_3$.
		\end{enumerate}
	\end{Theorem}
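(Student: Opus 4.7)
The implication (b) $\Rightarrow$ (a) is immediate: $K_2$ and $K_3$ are complete graphs, so Theorem~\ref{chordal} gives that all their matching powers are Cohen-Macaulay. For (a) $\Rightarrow$ (b), since $I(G)^{[\nu(G)]}$ is Cohen-Macaulay, Proposition~\ref{SunnyDay} reduces us to two cases: either $G$ is a star triangle or $G$ is bipartite.

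Suppose first that $G$ is a star triangle with $r$ triangles meeting at a common vertex $v$. If $r=1$, then $G=K_3$, as desired. If $r\ge 2$, then picking one non-$v$ vertex $a_i$ from each triangle gives a minimal vertex cover $C_1=\{v,a_1,\dots,a_r\}$ of size $r+1$, whereas $C_2=V(G)\setminus\{v\}$ is another minimal vertex cover of size $2r$. Since $r+1\ne 2r$, the ideal $I(G)=I(G)^{[1]}$ is not unmixed, contradicting the hypothesis; hence $r=1$ and $G=K_3$.

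Suppose instead $G$ is bipartite. By Corollary~\ref{bipartite}, $G$ is a Cohen-Macaulay forest; being a (connected) Cameron-Walker graph, $G$ is in fact a Cohen-Macaulay tree. Such a tree admits a perfect matching (Cohen-Macaulay bipartite graphs are very well-covered and hence carry perfect matchings by Favaron's theorem \cite{OF}), and this perfect matching $M$ is unique because in any tree every leaf must be matched with its unique neighbour (proceed by induction on $|V(G)|$ after removing a leaf and its neighbour). The Cameron-Walker condition $\nu(G)=\im(G)=|V(G)|/2$ then forces $M$ itself to be an induced matching. Suppose, for contradiction, that $G$ has at least two internal vertices. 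The subgraph $T'$ induced on the internal vertices of $G$ is connected—since a path in a tree joining two vertices of degree at least $2$ cannot traverse a leaf—and therefore contains an edge $\{u,v\}$. Letting $\ell_u,\ell_v$ denote the partners of $u,v$ in $M$, the induced subgraph of $G$ on $\{u,v,\ell_u,\ell_v\}$ contains the edge $\{u,v\}$ outside of $M$, contradicting that $M$ is induced. Hence $G$ has at most one internal vertex, and together with the existence of a perfect matching this forces $G=K_2$. The main obstacle is precisely this bipartite step, in which the uniqueness of the perfect matching of a tree is played against the strong Cameron-Walker condition $\nu=\im$ to collapse all possibilities down to a single edge.
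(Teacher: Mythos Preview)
Your proof is correct in outline and, in both sub-cases, takes a more elementary route than the paper. The paper disposes of the star-triangle case by invoking Theorem~\ref{chordal}, whereas your direct check that a star triangle with $r\ge2$ triangles is not unmixed is cleaner and self-contained. In the bipartite case the paper cites the structural classification of Cohen--Macaulay Cameron--Walker graphs from \cite[Theorem~1.3]{HHKO} (such graphs must carry pendant triangles, which is impossible in a forest), while you argue purely graph-theoretically from the Cameron--Walker equality $\nu(G)=\im(G)$.

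There is, however, one unjustified step in your bipartite argument. After producing an edge $\{u,v\}$ between two internal vertices, you assert that ``the induced subgraph of $G$ on $\{u,v,\ell_u,\ell_v\}$ contains the edge $\{u,v\}$ outside of $M$''. This presupposes $\{u,v\}\notin M$, i.e.\ $\ell_u\ne v$, which you have not established; a priori two internal vertices could be matched to one another. The fix is easy: in a Cohen--Macaulay forest the perfect matching consists entirely of pendant edges (this is exactly the content of the Herzog--Hibi--Zheng characterization \cite{HHZ} applied to forests, since the facets of $\Delta(G)$ with a free vertex are precisely the edges containing a leaf), so $\ell_u$ is a leaf and cannot equal the internal vertex $v$. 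Alternatively, and more simply, once you know $M$ is a \emph{perfect} induced matching you are done in one line: any edge $e\in E(G)\setminus M$ would join two distinct edges of $M$, contradicting that $M$ is induced, so $E(G)=M$; comparing $|E(G)|=|V(G)|/2$ with the tree identity $|E(G)|=|V(G)|-1$ forces $|V(G)|=2$. This bypasses the internal-vertex discussion entirely.
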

	
	\begin{proof}
		(b) $\Rightarrow$ (a) is obvious.\smallskip
		
		(a) $\Rightarrow$ (b) By Corollary~\ref{SunnyDay} we have that $G$ is either a star triangle or a bipartite graph. Since a star triangle is a chordal graph, by Theorem~\ref{chordal} and the assumption (a) we conclude that $G$ is a triangle $K_3$. Now, suppose that $G$ is bipartite. Then by Corollary~\ref{bipartite} we have that $G$ is a Cohen-Macaulay forest.
		Suppose that $G$ is not an edge. By  ~\cite[Theorem 1.3]{HHKO}, $G$ should consist of a connected bipartite graph with vertex partition $[n]\sqcup[m]$ such that there is exactly one leaf edge attached to each vertex $i\in[n]$ and that there is exactly one pendant triangle attached to each vertex $j\in[m]$. This is not possible, since $G$ is a forest.
	\end{proof}
	
	\section{Edge ideals of small graphs whose all matching powers are Cohen-Macaulay}\label{sec4}
	
	In this final section, we provide the classification of the edge ideals of graphs with a small number of vertices $(n\in\{2,3,4,5,6,7\})$ whose all matching powers are Cohen-Macaulay. This classification was obtained by using \textit{Macaulay2} \cite{GDS}, and the packages \texttt{NautyGraphs} \cite{MPNauty} and \texttt{MatchingPowers} \cite{FPack2}.
	
	Let $G$ be a graph on $n$ non-isolated vertices. We run our experiments in the polynomial ring $S=\mathbb{Q}[x_i:i\in V(G)]$ over the rational numbers $\mathbb{Q}$.
	
	For $n\in\{2,3,4\}$, the only graphs $G$ for which $I(G)^{[k]}$ is Cohen-Macaulay for all $1\le k\le\nu(G)$ are the complete graphs and the Cohen-Macaulay forests.
	
	Before proceeding further, notice that if $n$ is odd, there is no Cohen-Macaulay forest having $n$ non-isolated vertices, since any Cohen-Macaulay forest has a perfect matching, and thus it has an even number of vertices.
	
	For $n=5$, besides the complete graph, we also have the following graph $H$ and the cycle $C_5$,\medskip
	\begin{center}
		\begin{tikzpicture}[scale=0.8]
			\draw[-] (0,0) -- (1,0) -- (2,0) -- (3,0);
			\draw[-] (1.5,1) -- (0,0) -- (1,0) -- (1.5,1) -- (3,0);
			\filldraw[black] (0,0) circle (2pt) node[below]{{\tiny$1$}};
			\filldraw[black] (1,0) circle (2pt) node[below]{{\tiny$2$}};
			\filldraw[black] (2,0) circle (2pt) node[below]{{\tiny$3$}};
			\filldraw[black] (3,0) circle (2pt) node[below]{{\tiny$4$}};
			\filldraw[black] (1.5,1) circle (2pt) node[above]{{\tiny$5$}};
			\filldraw (7,2.1) circle (2pt) node[above]{{\tiny$1$}};
			\filldraw (8.3,1.1) circle (2pt) node[right]{{\tiny$2$}};
			\filldraw (7.8,-0.5) circle (2pt) node[right]{{\tiny$3$}};
			\filldraw (6.2,-0.5) circle (2pt) node[left]{{\tiny$4$}};
			\filldraw (5.7,1.1) circle (2pt) node[left]{{\tiny$5$}};
			\draw[-] (7,2.1)--(8.3,1.1)--(7.8,-0.5)--(6.2,-0.5)--(5.7,1.1)--(7,2.1);
			\filldraw (1.5,-1) node[below]{$H$};
			\filldraw (7.1,-1) node[below]{$C_5$};
		\end{tikzpicture}
	\end{center}
	
	For $n=6$, besides the complete graph and the Cohen-Macaulay forests, we only have two extra, non isomorphic, graphs. It can be noticed that for these two extra graphs $G_1,G_2$ we have that $I(G_i)^{[k]}=\m^{[2k]}$, for all $2\le k\le\nu(G_i)$ and $i=1,2$, are the squarefree Veronese ideals of degree $2k$, where $\m=(x_1,x_2,\dots,x_6)$.
	
	For $n=7$, besides the complete graph, there is again only two extra graphs $H_1,H_2$ for which $I(H_i)^{[k]}$ is Cohen-Macaulay for all $k$ and $i=1,2$. Once again, we have $I(H_i)^{[k]}=\m^{[2k]}$ for $i=1,2$ and $2\le k\le\nu(H_i)$, where $\m=(x_1,x_2,\dots,x_7)$.\smallskip
	
	It is well-known that up to $7$ vertices, the Cohen-Macaulay property of $I(G)$ is independent from the field $K$ of $S=K[x_i:i\in V(G)]$. Note that the Cohen-Macaulay property of the matching powers of complete graphs and Cohen-Macaulay forest is also independent from $K$. Moreover, for the extra graphs $G\in\{H,C_5,G_1,G_2,H_1,H_2\}$ we discovered above, it turns out that $I(G)^{[k]}=\m^{[2k]}$ for all $2\le k\le\nu(G)$. Since the Cohen-Macaulay property of a squarefree Veronese ideal does not depend on $K$, it follows that for the extra graphs $G\in\{H,C_5,G_1,G_2,H_1,H_2\}$, we have that $I(G)^{[k]}$ is Cohen-Macaulay for all $k$ and every field $K$. These facts suggest that
	\begin{Conjecture}
		The family of graphs $G$ for which $I(G)^{[k]}$ is Cohen-Macaulay for all $1\le k\le\nu(G)$ does not depend on the characteristic of $K$.
	\end{Conjecture}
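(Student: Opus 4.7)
My plan is to separate the conjecture into a characteristic-free descent step, a case check for the members known to lie in the class, and a residual classification step which is the real obstacle. Let $\mathcal{F}_K$ denote the class of graphs $G$ with $I(G)^{[k]}$ Cohen-Macaulay over $K$ for all $1\le k\le\nu(G)$, so that the conjecture becomes $\mathcal{F}_K=\mathcal{F}_{K'}$ for any two fields $K,K'$. The content of the conjecture is really about $k<\nu(G)$, since Theorem~\ref{Thm:I(G)[nu(G)]CM} already gives a purely combinatorial (and hence field-independent) criterion for Cohen-Macaulayness of the top matching power.

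First I would verify that the hereditary machinery of Section~\ref{sec1} is characteristic-free. Theorem~\ref{neighbourhood} and Corollary~\ref{ConnCom} rest on the Betti splitting of Lemma~\ref{Lem:Betti-Saha}, whose Tor-vanishing input (Proposition~\ref{Prop:CriterionBettiSplit}, Lemma~\ref{Lemma:EsistenzaVarphi}) is built from purely combinatorial lcm comparisons. The Betti-splitting depth formula~(\ref{eq:BettiSplitEq}) then holds over an arbitrary field. Hence the closure properties of $\mathcal{F}_K$ under connected-component decomposition and neighbourhood deletion, and membership for the last matching power, are already independent of $K$.

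Second, each currently known member of $\mathcal{F}_K$ belongs to $\mathcal{F}_K$ for every $K$. For the complete graph $K_n$, all matching powers are squarefree Veronese ideals $\m^{[2k]}$, whose Cohen-Macaulayness in any characteristic is~\cite[Theorem 4.2]{HH2006}. For Cohen-Macaulay forests, \cite[Corollary 3.6]{DRS24} can be inspected to verify that its proof — relying on splitting techniques of the same type as Lemma~\ref{Lem:Betti-Saha} and on polarization arguments — goes through over any field. For the sporadic examples $H,C_5,G_1,G_2,H_1,H_2$ exhibited in Section~\ref{sec4}, Section~\ref{sec4} observes $I(G)^{[k]}=\m^{[2k]}$ for all $2\le k\le\nu(G)$, reducing the higher matching powers to the Herzog–Hibi case again, while $I(G)$ itself at $k=1$ is Cohen-Macaulay in any characteristic by a direct Reisner check on the independence complex (each such complex is either shellable or has links with reduced homology concentrated in a field-independent way).

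The main obstacle is the classification step: proving that the list above exhausts $\mathcal{F}_K$ for every $K$. The natural route is induction on $|V(G)|$ using Theorem~\ref{neighbourhood}, reducing a connected $G\in\mathcal{F}_K$ to the subgraphs $G\setminus N_G[x]$, which by induction belong to a field-independent list. To lift this back to $G$ one would want a depth \emph{equality} strengthening of Lemma~\ref{Lem:Betti-Saha}, namely $\depth S/I(G)^{[k]} = \depth S/I(G\setminus N_G[x])^{[k]}-|N_G(x)|$ whenever the two other depths appearing in the Betti-splitting formula dominate — equivalently, showing that the Tor-vanishing inclusions $J\cap L\to J$ and $J\cap L\to L$ yield a splitting on local cohomology, not only on Tor. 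Such a depth formula would let one characterize $\mathcal{F}_K$ combinatorially and so settle the conjecture. The hard part is that no such exact formula is presently available; any approach that sidesteps it will instead have to produce the classification by other structural means (for instance, by showing that all matching powers $I(G)^{[k]}$ with $k\ge 2$ must coincide with $\m^{[2k]}$ outside of the forest case, thereby bypassing characteristic issues for $k\ge 2$ and leaving only $k=1$ to handle via Reisner's criterion on the restricted combinatorial class that survives).
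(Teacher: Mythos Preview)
The statement you are addressing is a \textbf{Conjecture} in the paper, not a theorem: the paper offers no proof and explicitly records it as open, supported only by the computational evidence in Section~\ref{sec4}. There is therefore no ``paper's own proof'' to compare against.

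What you have written is not a proof but a proof \emph{plan}, and you say so yourself: your third paragraph openly identifies the missing step, namely that turning the hereditary inequality of Lemma~\ref{Lem:Betti-Saha} into an equality, or otherwise obtaining a complete field-independent classification of $\mathcal{F}_K$, is ``the real obstacle'' for which ``no such exact formula is presently available.'' That is an honest assessment, but it means your proposal does not close the conjecture. Your first two paragraphs are correct and useful observations---the Betti-splitting machinery, Theorem~\ref{Thm:I(G)[nu(G)]CM}, the Veronese description of the sporadic examples, and the forest case are all characteristic-free for the reasons you give---but these only show that the \emph{currently known} members of $\mathcal{F}_K$ are field-independent and that several reduction tools are available over any $K$. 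They do not show $\mathcal{F}_K=\mathcal{F}_{K'}$ in general, and your suggested route via a depth equality or via proving $I(G)^{[k]}=\m^{[2k]}$ for $k\ge 2$ outside the forest case is precisely the content of the conjecture (and of Question~4.2) rather than a method for resolving it.
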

	
	The above experiments also suggest that the following question could have a positive answer.
	\begin{Question}
		It is true that for all $n\ge 5$, up to isomorphism, there are only two graphs on $n$ non-isolated vertices, non isomorphic to the complete graph $K_n$ and non isomorphic to a Cohen-Macaulay forest, for which all the matching powers of their edge ideals are Cohen-Macaulay?
	\end{Question}\bigskip
	
	\noindent\textbf{Acknowledgment.}
	A. Ficarra was partly supported by INDAM (Istituto Nazionale di Alta Matematica), and also by the Grant JDC2023-051705-I funded by
	MICIU/AEI/10.13039/501100011033 and by the FSE+. Somayeh Moradi is supported by the Alexander von Humboldt Foundation.

\end{document}